\documentclass[12pt]{amsart}
\usepackage{epsfig,color}

\headheight=6.15pt \textheight=8in \textwidth=6.5in
\oddsidemargin=0in \evensidemargin=0in \topmargin=0in

\setcounter{section}{0}
\theoremstyle{definition}

\newtheorem{theorem}{Theorem}[section]
\newtheorem{definition}[theorem]{Definition}

\newtheorem{proposition}[theorem]{Proposition}
\newtheorem{lemma}[theorem]{Lemma}
\newtheorem{remark}[theorem]{Remark}
\newtheorem{corollary}[theorem]{Corollary}
\newtheorem{example}[theorem]{Example}
\newtheorem*{acknowledgements}{Acknowledgements}

\numberwithin{equation}{section}

\newcommand{\dv}{{\text{div}}}
\newcommand{\ee}{{\text{e}}}
\newcommand{\lb}[1]{\langle#1\rangle}

\newcommand{\mf}{\mathbf}

\begin{document}

\title[Volume growth, entropy and stability for translators]{Volume growth, entropy and stability for translating solitons}

\author{Qiang Guang}

\address{Department of Mathematics, University of California, Santa Barbara, CA 93106-3080, USA}
\email{guang@math.ucsb.edu}

\thanks{}

\subjclass[2000]{53C44}

\keywords{translators, volume growth, entropy, stability, mean curvature flow}

\begin{abstract}
We study volume growth, entropy and stability for translating solitons of mean curvature flow. First, we prove that every complete properly immersed translator has at least linear volume growth. Then, by using Huisken's monotonicity formula, we compute the entropy of the grim reaper and the bowl solitons. We also give a curvature estimate for translators in $\mathbf{R}^3$ with small entropy. Finally, we estimate the spectrum of the stability operator $L$ for translators and give a rigidity result of $L$-stable translators.
\end{abstract}

\maketitle

\section{Introduction}

A smooth hypersurface $\Sigma^{n} \subset \mathbf{R}^{n+1}$ is called a \textit{translating soliton} (\textit{translator} for short) if it satisfies the equation
\begin{equation}\label{0.2}
H=-\langle y,\mathbf{n} \rangle,
\end{equation}
where $H$ is the mean curvature, $\mathbf{n}$ is the unit normal and $y\in \mathbf{R}^{n+1}$ is a constant vector. 

Translators play an important role in the study of mean curvature flow (``MCF") defined as $(\partial_t x)^\perp=-H\mathbf{n}$, where $x$ is the position vector. On one hand, every translator $\Sigma$ gives a special translating solution $\{\Sigma_t=\Sigma+ty\}_{t \in \mathbf{R}}$ to MCF. On the other hand, they arise as blow-up solutions of MCF at type \uppercase\expandafter{\romannumeral2} singularities. For instance, Huisken and Sinestrari \cite{HS} proved that at type \uppercase\expandafter{\romannumeral2} singularity of a mean convex flow (a MCF with mean convex solution), there exists a blow-up solution which is a convex translating solution. Translators have been extensively studied in recent years; see, e.g., \cite{CSS}, \cite{HR1}, \cite{IR1},  \cite{MSS}, \cite{LS2}, \cite{WXJ}, and \cite{X1}.

For simplicity, we assume that the constant vector $y=\mathbf{E}_{n+1}$, so translators satisfy the equation
\begin{equation}
H=-\lb{\mathbf{E}_{n+1},\mathbf{n}}.
\end{equation}

In $\mathbf{R}^{n+1}$, there is a unique (up to rigid motion) rotationally symmetric, strictly convex translator, denoted by $\Gamma^n$ (see \cite{AW}). For $n=1$, the translator $\Gamma^1$ is the \textit{grim reaper} and given as the graph of the function
\begin{equation}\label{0.6}
u(x)=-\log\cos x,\,\,\,\,\,\,\,x\in (-\pi/2,\pi/2).
\end{equation}
For $n\geq 2$, the translator $\Gamma^n$ is an entire graph and is usually called the \textit{bowl soliton}. We assume $\Gamma^{n}=\{(x,f(x))\in \mathbf{R}^{n+1}: x\in \mathbf{R}^{n}\}$, where $f$ is a convex function on $\mathbf{R}^{n}$ such that
\begin{equation}\label{0.7}
\dv\Big(\frac{\nabla f}{\sqrt{1 + |\nabla f|^2}}\Big)=\frac{1}{\sqrt{1+|\nabla f|^2}},
\end{equation} 
and $f(0)=\nabla f(0)=0$.

In \cite{WXJ}, Wang proved that when $n=2$, every entire convex translator must be rotationally symmetric. However, in every dimension greater than two, there exist non-rotationally symmetric, entire convex translators. Recently, Haslhofer \cite{HR1} obtained the uniqueness theorem of the bowl soliton in all dimensions under the assumptions of uniformly 2-convexity and noncollapsing condition.

\vspace{2mm}
In this paper, we study translators from three  aspects: volume growth, entropy and stability.

In the first part of the paper, we consider the volume growth of translators. Understanding the volume growth of certain geometric solitons is always a fundamental and interesting topic in the study of geometric flows, such as the volume growth of self-shrinkers for MCF and the volume growth of gradient shrinking solitons for Ricci flow. For instance, Cao and Zhou \cite{CZ1} proved that every complete noncompact gradient shrinking Ricci soliton has at most Euclidean volume growth. In \cite{DX3}, Ding and Xin showed that this is also true for complete properly immersed self-shrinkers (see also \cite{CZ}). 

Concerning the lower volume growth estimate, Munteanu and Wang \cite{MW1} proved that any complete noncompact gradient shrinking Ricci soliton has at least linear volume growth. Li and Wei \cite{LW1} proved that every complete noncompact properly immersed self-shrinker also has at least linear volume growth. For properly immersed translators, by an easy argument, an analog of Li-Wei's result can  be obtained.

\begin{theorem}
Let $\Sigma^n \subset \mathbf{R}^{n+1}$ be a complete properly immersed translator. Then for any $x\in \Sigma$, there exists a constant $C>0$ such that
\[
\text{Vol}(\Sigma\cap B_r(x))=\int_{\Sigma\cap B_r(x)} d\mu \geq Cr \,\,\,\,\, \text{for all}\,\,r\geq 1.
\]
\end{theorem}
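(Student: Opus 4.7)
The plan is to exploit the fact that every translator has uniformly bounded mean curvature: from the defining equation, $|H|=|\lb{\mathbf{E}_{n+1},\mathbf{n}}|\leq 1$. Once this is in hand, the result reduces to combining a standard density lower bound with the observation that a properly immersed translator is necessarily unbounded.

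I would first rule out the possibility that $\Sigma$ is bounded. Since $\Delta_\Sigma x=-H\mathbf{n}$, the coordinate function $x_{n+1}=\lb{x,\mathbf{E}_{n+1}}$ satisfies
\[
\Delta_\Sigma x_{n+1}=\lb{\mathbf{E}_{n+1},-H\mathbf{n}}=-H\lb{\mathbf{E}_{n+1},\mathbf{n}}=H^2\geq 0,
\]
so $x_{n+1}$ is subharmonic on $\Sigma$. If $\Sigma$ were bounded it would be compact by properness, and the strong maximum principle would force $x_{n+1}$ to be constant, hence $H\equiv 0$; but $\mathbf{R}^{n+1}$ admits no compact minimal hypersurfaces. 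So $\Sigma$ is unbounded. Restricting to the connected component containing $x$, the intermediate value theorem applied to the continuous proper function $y\mapsto |y-x|$ produces, for each integer $k\geq 1$, a point $p_k\in\Sigma$ with $|p_k-x|=k$.

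Next I would invoke the standard monotonicity formula for submanifolds with bounded mean curvature (Simon, Allard): since $|\vec{H}|\leq 1$, the density estimate
\[
\text{Vol}(\Sigma\cap B_\rho(p))\geq c_n\,\rho^n,\qquad 0<\rho\leq \tfrac12,
\]
holds at every $p\in\Sigma$, with $c_n>0$ depending only on $n$. Because $|p_k-p_j|\geq |k-j|\geq 1$, the Euclidean balls $B_{1/2}(p_k)$ are pairwise disjoint, and $B_{1/2}(p_k)\subset B_{k+1/2}(x)$. Summing yields
\[
\text{Vol}(\Sigma\cap B_r(x))\;\geq\;\sum_{k=1}^{\lfloor r-1/2\rfloor}\text{Vol}(\Sigma\cap B_{1/2}(p_k))\;\geq\; c_n 2^{-n}\,\lfloor r-1/2\rfloor\;\geq\; Cr
\]
for all $r\geq 1$, which is the desired linear volume growth.

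I expect the only mildly technical point to be that the argument must be carried out in the \emph{immersed} setting, with volumes counted with multiplicity and the $p_k$ chosen in $\Sigma$ itself rather than in its image. Both Simon's monotonicity and the intermediate value argument go through unchanged in this setting, which is consistent with the author's remark that the estimate is an easy analog of Li--Wei's theorem for self-shrinkers.
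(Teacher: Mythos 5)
Your proof is correct and takes a genuinely different route from the paper. The paper establishes the identity $\Delta_\Sigma \ee^{x_{n+1}} = \ee^{x_{n+1}}$, then uses the co-area formula and Stokes to show that $\tilde V(r)\ee^{-r}$ is non-decreasing, where $\tilde V$ is the weighted volume; this gives exponential weighted volume growth, which is then deflated by $\ee^{-r}$ to recover linear unweighted growth. Your argument is structured differently: you extract from the translator equation only the bound $|H|\leq 1$, prove noncompactness via the subharmonicity of $x_{n+1}$, produce points $p_k$ at integer distances by IVT, and then sum the monotonicity-formula density lower bounds over the disjoint balls $B_{1/2}(p_k)$. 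Your approach is actually more general -- it yields linear volume growth for \emph{any} complete noncompact properly immersed hypersurface with bounded mean curvature, with the translator structure used only for the $|H|\leq 1$ input and the noncompactness argument -- whereas the paper's approach is tailored to translators but has the side benefit of isolating the clean exponential lower bound $\tilde V(r)\geq C\ee^{r}$ for the weighted volume (Proposition~\ref{4.2.15}), which is of independent interest. One small point to tidy in your write-up: $\lfloor r-1/2\rfloor$ vanishes for $r\in[1,3/2)$, so the chain of inequalities gives nothing there; you should supplement the packing bound with the single density estimate $\text{Vol}(\Sigma\cap B_r(x))\geq \text{Vol}(\Sigma\cap B_{1/2}(x))\geq c_n 2^{-n}$ for $1\leq r\leq 2$, after which $Cr$ holds for all $r\geq 1$ with a suitably adjusted $C$.
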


\begin{remark}
Unlike self-shrinkers, translators do not necessarily have Euclidean volume growth; see the example constructed in \cite{N5}. In a recent work of Xin \cite{X1}, the author considered the volume growth of translators in certain  conformal metric.
\end{remark}

\vspace{2mm}
The second part of this paper is concerned with the entropy of translators. Recall that given $t_0 >0$ and $z_0 \in \mathbf{R}^{n+1}$, 
the $F$-functional $F_{z_0,t_0}$ of a hypersurface $\Sigma \subset\mathbf{R}^{n+1}$ is defined by
\begin{equation}
F_{z_0,t_0}(\Sigma)=(4\pi t_0)^{-\frac{n}{2}} \int_{\Sigma} {\textrm{e}^{-\frac{|x-z_0|^2}{4t_0}}} d\mu,
\end{equation}
and the entropy of $\Sigma$ is defined by
\begin{equation}\label{0.4}
\lambda(\Sigma)=\sup_{z_0,t_0} F_{z_0,t_0}(\Sigma),
\end{equation}
where the supremum is taken over all $t_0 >0$ and $z_0 \in \mathbf{R}^{n+1}$. Moreover, it is easy to see that $\lambda(\Sigma\times \mathbf{R})=\lambda(\Sigma)$.

Note that the entropy is invariant under dilations and rigid motions. By Huisken's monotonicity formula \cite{Hui90}, the entropy is non-increasing under MCF. Therefore, the entropy of the initial hypersurface gives a bound for the entropy of all future singularities. It was proved in \cite{CM1} that the entropy of a self-shrinker is equal to the $F$-functional $F_{0,1}$, so no supremum is needed. Colding-Ilmanen-Minicozzi-White \cite{CM3} showed that the round sphere minimizes entropy among all closed self-shrinkers. Recently, Bernstein and Wang \cite{BW} extended this result and proved that the round sphere minimizes entropy among all closed hypersurfaces up to dimension six.

A nature question is what is the entropy of the grim reaper and the bowl solitons. Using Huisken's monotonicity formula and various estimates, we obtain the following theorems.

\begin{theorem}\label{0.9}
The entropy of the grim reaper $\Gamma^{1} \subset\mathbf{R}^{2}$ is $2$.
\end{theorem}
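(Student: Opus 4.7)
My plan is to establish $\lambda(\Gamma^1)\le 2$ via Huisken's monotonicity formula applied to the eternal translating flow, and $\lambda(\Gamma^1)\ge 2$ by an explicit evaluation of $F_{z_0,t_0}(\Gamma^1)$ along a well-chosen sequence $(z_0,t_0)$. The value $2$ is motivated by the observation that, viewed from far along its direction of translation, $\Gamma^1$ is uniformly close to the union of the two vertical lines $\{x=\pm\pi/2\}$, which itself has entropy $2$.

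For the upper bound, fix $z_0\in\mathbf{R}^2$ and $t_0>0$ and consider the eternal mean curvature flow $\Sigma_t:=\Gamma^1+t\mathbf{E}_2$, $t\in\mathbf{R}$. Huisken's monotonicity formula says
\[
G(t)\;:=\;(4\pi(t_0-t))^{-\frac{1}{2}}\int_{\Sigma_t}e^{-|x-z_0|^2/(4(t_0-t))}\,ds
\]
is non-increasing in $t<t_0$, and the substitution $x=y+t\mathbf{E}_2$ with $y\in\Gamma^1$ rewrites $G(t)=F_{z_0-t\mathbf{E}_2,\,t_0-t}(\Gamma^1)$. Writing $t=-\tau$, monotonicity gives
\[
F_{z_0,t_0}(\Gamma^1)\;=\;G(0)\;\le\;G(-\tau)\;=\;F_{z_0+\tau\mathbf{E}_2,\,t_0+\tau}(\Gamma^1)\qquad\text{for every }\tau>0,
\]
so it remains to show that the right-hand side tends to $2$ as $\tau\to\infty$. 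For large $\tau$ the Gaussian is centred at height $\sim\tau$ with width $\sqrt{t_0+\tau}\ll\tau$, so its bulk lies in the asymptotic regime of $\Gamma^1$. Parametrizing each of the two branches by $u=-\log\cos a\ge 0$ (so that $a\to\pm\pi/2$ and $\sec a\,da=du/\sqrt{1-e^{-2u}}\to du$ as $u\to\infty$), each branch contributes the integral of a one-dimensional Gaussian on a vertical line, which tends to $1$; the total limit is $2$, and the sup over $(z_0,t_0)$ gives $\lambda(\Gamma^1)\le 2$.

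For the matching lower bound I would take $z_0=T\mathbf{E}_2$, $t_0=T$, $T\to\infty$, and apply the same change of variables together with the pointwise estimates $a^2\le\pi^2/4$ and $(1-e^{-2u})^{-1/2}\ge 1$ to obtain
\[
F_{T\mathbf{E}_2,T}(\Gamma^1)\;\ge\;2\,e^{-\pi^2/(16T)}(1-o(1))\;\longrightarrow\;2,
\]
whence $\lambda(\Gamma^1)\ge 2$. The main technical work in both halves is justifying the reduction to the two straight asymptotes inside the Gaussian bulk: one must control the contribution from the bounded region $\{u\le R\}$, where the deviation from the asymptotes is not small but the Gaussian there is evaluated at distance $\sim\tau$ from its centre and hence contributes $O(e^{-c\tau})$, as well as the measure discrepancy $(1-e^{-2u})^{-1/2}-1$ (negligible once $u$ is moderately large) and the one-sided truncation $u\ge 0$ (harmless because the Gaussian centre lies at height $\sim T$ with width $\sqrt T$).
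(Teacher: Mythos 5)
Your proposal is correct and follows essentially the same route as the paper: Huisken's monotonicity formula applied to the eternal translating flow yields $F_{z_0,t_0}(\Gamma)\le F_{z_0+\tau\mathbf{E}_2,\,t_0+\tau}(\Gamma)$ (the paper's Lemma \ref{2.1.8}), and both bounds are then obtained by the substitution $u=-\log\cos x$ and asymptotic analysis of the resulting Gaussian integral as $\tau\to\infty$. The paper carries out the upper-bound estimate more explicitly by splitting the rescaled integral into the ranges $t\in[0,N/6]$ and $t\in[N/6,\infty)$, but the underlying reasoning matches what you describe.
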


\begin{theorem}\label{0.10}
The entropy of the rotationally symmetric convex translating soliton $\Gamma^{n} \subset\mathbf{R}^{n+1}$ ($n\geq 2)$ is equal to the entropy of the sphere $\mathbf{S}^{n-1}$, i.e.,
\[
\lambda(\Gamma^n)=\lambda(\mathbf{S}^{n-1})=n\alpha (n)\Big(\frac{n-1}{2\pi\textrm{e}}\Big)^{\frac{n-1}{2}},
\]
where $\alpha (n)$ is the volume of unit ball in $\mathbf{R}^{n}$.
\end{theorem}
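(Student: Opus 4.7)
The plan is to prove $\lambda(\Gamma^n) = \lambda(\mathbf{S}^{n-1})$ by applying Huisken's monotonicity formula to the translating MCF $\{\Sigma_s := \Gamma^n + s\mathbf{E}_{n+1}\}_{s\in\mathbf{R}}$ and identifying the tangent flow at $s = -\infty$ as the shrinking round cylinder $\mathbf{S}^{n-1}_{\sqrt{-2(n-1)s}}\times\mathbf{R}$. The crucial input is the precise asymptotic expansion of the bowl profile, $f(r) = \frac{r^2}{2(n-1)} - \log r + O(1)$ as $r\to\infty$, which follows from the rotationally symmetric ODE derived from \eqref{0.7} (see \cite{AW}).

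Fix $(z_0, t_0)\in \mathbf{R}^{n+1}\times(0,\infty)$ and set $\tilde\Sigma_s := (\Sigma_s - z_0)/\sqrt{t_0 - s}$ for $s < t_0$. A change of variables in the backward heat kernel gives
\[
\int_{\Sigma_s}\frac{e^{-|x-z_0|^2/(4(t_0-s))}}{(4\pi(t_0-s))^{n/2}}\,d\mu = F_{0,1}(\tilde\Sigma_s),
\]
and by translation invariance this equals $F_{z_0 - s\mathbf{E}_{n+1},\,t_0 - s}(\Gamma^n)$. Huisken's monotonicity formula asserts that the left-hand side is nonincreasing in $s$, so for every $s < 0$
\[
F_{z_0,t_0}(\Gamma^n) = F_{0,1}(\tilde\Sigma_0) \leq F_{0,1}(\tilde\Sigma_s) = F_{z_0-s\mathbf{E}_{n+1},\,t_0-s}(\Gamma^n).
\]
Plugging the asymptotic expansion of $f$ into the graph representation of $\tilde\Sigma_s$ shows that as $s\to -\infty$ the rescaled surface concentrates on the cylinder $\{|\tilde y| = \sqrt{2(n-1)}\}\times\mathbf{R}$ (with $\mathbf{R}$-factor along the $\mathbf{E}_{n+1}$-axis): on any bounded axial interval the radial coordinate equals $\sqrt{2(n-1)} + O(1/\sqrt{-s})$, while the axial coordinate sweeps out all of $\mathbf{R}$. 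Hence the $F_{0,1}$-values converge to $F_{0,1}(\mathbf{S}^{n-1}_{\sqrt{2(n-1)}}\times\mathbf{R})$, which by scale invariance of $\lambda$ and the product formula $\lambda(\Sigma\times\mathbf{R})=\lambda(\Sigma)$ equals $\lambda(\mathbf{S}^{n-1})$.

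Both inequalities now follow simultaneously: taking the supremum over $(z_0,t_0)$ of $F_{z_0,t_0}(\Gamma^n)\leq \lim_{s\to -\infty}F_{0,1}(\tilde\Sigma_s)=\lambda(\mathbf{S}^{n-1})$ yields $\lambda(\Gamma^n)\leq\lambda(\mathbf{S}^{n-1})$, while the sequence of genuine $F$-values $F_{z_0-s\mathbf{E}_{n+1},\,t_0-s}(\Gamma^n)\to\lambda(\mathbf{S}^{n-1})$ on $\Gamma^n$ gives $\lambda(\Gamma^n)\geq\lambda(\mathbf{S}^{n-1})$. The numerical formula in the statement is then the known evaluation of $\lambda(\mathbf{S}^{n-1})$ from \cite{CM1}.

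The principal technical obstacle is justifying the passage to the limit $F_{0,1}(\tilde\Sigma_s)\to F_{0,1}(\mathbf{S}^{n-1}_{\sqrt{2(n-1)}}\times\mathbf{R})$. The rescaled surfaces converge to the cylinder smoothly only on compacta, while both are noncompact and the Gaussian area is sensitive to the tails. What I would need is a uniform-in-$s$ tail estimate: using $f(r)\sim r^2/(2(n-1))$ and the Jacobian $\sqrt{1+|\nabla f|^2}\sim r/(n-1)$, one verifies that outside the neighborhood $\bigl||\tilde y| - \sqrt{2(n-1)}\bigr|\leq \epsilon$ the axial coordinate $\tilde z$ of $\tilde \Sigma_s$ is of order $\sqrt{-s}$, so the weight $e^{-|\tilde x|^2/4}$ beats the polynomial growth of volume and a dominated-convergence argument applies. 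The subleading $-\log r$ correction to $f$ contributes a shift that vanishes after dividing by $\sqrt{t_0-s}$, so it does not affect the limit.
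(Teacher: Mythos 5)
Your overall strategy coincides with the paper's: both rest on Lemma~\ref{2.1.8} (your parabolic rescaling $\tilde\Sigma_s = (\Sigma_s - z_0)/\sqrt{t_0 - s}$ reproduces that lemma with $N = t_0 - s$ and $\delta = y_0 - t_0$), and both deduce the two inequalities between $\lambda(\Gamma^n)$ and $\lambda(\mathbf{S}^{n-1})$ by letting the rescaling parameter run to infinity. Where you differ is in how the limiting $F$-value is evaluated. You identify the blow-down with the shrinking cylinder $\mathbf{S}^{n-1}\times\mathbf{R}$, invoke its known entropy, and use the sharp asymptotic $f(r) = \frac{r^2}{2(n-1)} - \log r + O(1)$; this framing is conceptually cleaner and would, in principle, extend to any translator with the same blow-down. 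The paper instead substitutes the weaker one-sided bounds of Proposition~\ref{2.3.2} directly into the Gaussian integral and never needs the full asymptotic expansion (the $\epsilon$-dependent bounds in parts (3)--(4) are enough, at the cost of an $\epsilon\to 0$ limit at the end).

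The genuine gap is the one you flag yourself: upgrading local smooth convergence of $\tilde\Sigma_s$ to convergence of the Gaussian areas $F_{0,1}(\tilde\Sigma_s)$. The heuristic that the Gaussian weight beats polynomial volume growth is not a proof; both surfaces are noncompact, and because $\tilde\Sigma_s$ extends down to the tip at axial coordinate $\tilde z\approx-\sqrt{-s}$ and spreads outward as a paraboloid above, one needs an $s$-uniform dominating function for the integrand in an appropriate parametrization. (Dominated convergence in the radial variable $\tilde y$ fails outright, since the surface concentrates on $\tilde y=\sqrt{2(n-1)}$ and the limit is a delta there; one must pass to the axial variable.) Constructing such a dominating function amounts to the change of variables $u=(f(r)-N-\delta)/\sqrt{N}$ and the two-sided bounds on $f$ and $f'$ from Proposition~\ref{2.3.2}, which is precisely what the proofs of Lemmas~\ref{2.3.15} and~\ref{2.3.17} carry out, including separate control of the contribution from the tip region and the outer paraboloidal tail. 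So your outline is correct and the approach does go through, but the technical heart of the argument --- the tail estimate making the blow-down convergence quantitative in the Gaussian norm --- is left unexecuted.
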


If we consider weak solutions of the translator equation, more precisely, an integral rectifiable varifold, using the idea of ``boundary-less" varifold from \cite{W5} and standard geometric measure theory, then these ``boundary-less" weak solutions with small entropy are indeed smooth.
Combining this regularity result with Allard's compactness theorem, we are able to prove a curvature estimate for translators with small entropy in $\mathbf{R}^3$.

\vspace{2mm}
In the last part, we study translators from the point of view of minimal hypersurfaces. Following the notation in \cite{LS2}, for any hypersurface $\Sigma^n \subset \mathbf{R}^{n+1}$, we define the weighed functional $\mathcal{F}$ by
\[
\mathcal{F}(\Sigma)=\int_{\Sigma} \ee^{x_{n+1}}\,d\mu,
\]
where $x_{n+1}$ is the $(n+1)$-th coordinate of the position vector of $\Sigma$.

The first variation of the functional $\mathcal{F}$ gives that the critical points are translators. Therefore, translators can be viewed as minimal hypersurfaces in a conformal metric. They can also be viewed as $f$-minimal hypersurfaces with an appropriate function $f$; see \cite{IR1} where the authors study translators by using the theory of $f$-minimal hypersurfaces. By computing the second variation of the functional $\mathcal{F}$, it is natural to define the corresponding stability operator $L$ as 
\begin{equation}
L=\Delta+\lb{\mathbf{E}_{n+1},\nabla \cdot}+|A|^2.
\end{equation}

For translators, there may not be a lowest eigenvalue for the operator $L$, but we can still define the bottom of the spectrum $\mu_1$. Using the fact that $LH=0$ for translators, we show that $\mu_1$ is nonpositive if the weighed $L^2$ norm of $H$ satisfies certain growth. As a direct corollary, we obtain the following theorem.

\begin{theorem}\label{0.2.1}
For all grim reaper hyperplanes $\Gamma\times \mathbf{R}^{n-1}$, we have $\mu_1(\Gamma\times \mathbf{R}^{n-1})=0$.
\end{theorem}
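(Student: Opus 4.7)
The plan is to establish $\mu_1(\Gamma\times\mathbf{R}^{n-1})=0$ by proving the two inequalities $\mu_1\le 0$ and $\mu_1\ge 0$ separately. For the upper bound I would invoke the general criterion described in the paragraph preceding the theorem, which exploits $LH=0$ together with a growth estimate on the weighted $L^2$ norm of $H$. For the lower bound I would use a Fischer--Colbrie type argument, taking advantage of the fact that $-H$ is a strictly positive function lying in the kernel of $L$.

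To verify the upper bound, I would parametrize $\Sigma=\Gamma\times\mathbf{R}^{n-1}$ over the open strip $(-\pi/2,\pi/2)\times\mathbf{R}^{n-1}$ via $(x_1,\dots,x_n)\mapsto(x_1,\dots,x_n,-\log\cos x_1)$. A short computation gives $H=-\cos x_1$, $|A|^2=\cos^2 x_1$, induced area element $d\mu=\sec x_1\,dx_1\cdots dx_n$, and weight $\ee^{x_{n+1}}=\sec x_1$, so the natural weighted integrand collapses to
\[
H^2\,\ee^{x_{n+1}}\,d\mu = dx_1\cdots dx_n.
\]
In particular $\int_{\Sigma\cap B_R(0)} H^2\,\ee^{x_{n+1}}\,d\mu$ grows at most like $R^{n-1}$, which is well within the polynomial range required by the general criterion, yielding $\mu_1\le 0$.

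For the lower bound, set $u=-H=\cos x_1$; this is strictly positive on $\Sigma$ and $Lu=0$. Given any $\phi\in C_c^\infty(\Sigma)$, set $w=\phi/u$, which is smooth and compactly supported because $\phi$ is supported inside the open strip where $u>0$. Writing the drift part $\mathcal{L}=\Delta+\lb{\mf{E}_{n+1},\nabla\cdot}$ of $L$ in the divergence form $\mathcal{L}=\ee^{-x_{n+1}}\dv(\ee^{x_{n+1}}\nabla\cdot)$ and using $Lu=0$, a standard two-step integration by parts collapses to
\[
-\int_\Sigma \phi\,L\phi\,\ee^{x_{n+1}}\,d\mu = \int_\Sigma u^2\,|\nabla w|^2\,\ee^{x_{n+1}}\,d\mu\ge 0,
\]
so the Rayleigh quotient is nonnegative and hence $\mu_1\ge 0$. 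Combining the two bounds gives $\mu_1=0$. The only place one has to be a little careful is the applicability of the Fischer--Colbrie trick, which demands that $u$ not vanish where the test functions live; this is automatic here because the ambient manifold $\Sigma$ is the \emph{open} strip product, where $\cos x_1>0$, so $w=\phi/u$ is a bona fide compactly supported test function and no boundary terms appear.
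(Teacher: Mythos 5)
Your lower bound argument is correct and self-contained: using the positive Jacobi function $u=-H$ together with $LH=0$, the substitution $\phi=uw$ and the two integrations by parts (via the self-adjointness of $\mathcal{L}$ in Lemma \ref{1.1.30}) do indeed give $-\int_\Sigma\phi L\phi\,\ee^{x_{n+1}}=\int_\Sigma u^2|\nabla w|^2\ee^{x_{n+1}}\ge 0$. This is the implication (3)$\Rightarrow$(2) of Lemma \ref{3.1.5}; the paper instead gets $\mu_1\ge 0$ by quoting Shahriyari's result that translating graphs are $L$-stable and then invoking (1)$\Leftrightarrow$(2). Your computations of $H$, $|A|^2$, $d\mu$, $\ee^{x_{n+1}}$ and the collapse $H^2\ee^{x_{n+1}}d\mu=dx_1\cdots dx_n$ are all correct.

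The gap is in the upper bound. You compute $\int_{\Sigma\cap B_R}H^2\ee^{x_{n+1}}\,d\mu\lesssim R^{n-1}$ and assert this is ``well within the polynomial range required by the general criterion.'' But Lemma \ref{3.1.10} requires $\int_{\Sigma\cap B_R}H^2\ee^{x_{n+1}}\leq C_0R^\alpha$ with $\alpha<2$, so your exponent $n-1$ satisfies that hypothesis only for $n=1,2$. For $n\geq 3$ the criterion as stated does not apply; this is precisely the point of the Remark immediately following Corollary \ref{3.1.25}. The conclusion is still reachable, but you must re-run the cutoff estimate rather than cite the lemma as a black box: with $\eta=1$ on $B_R$ cut off linearly on $B_{2R}\setminus B_R$, inequality (\ref{3.1.20}) reads
\[
-\int_\Sigma\big(\eta H L(\eta H)+\delta\eta^2H^2\big)\ee^{x_{n+1}}\leq \frac{1}{R^2}\int_{\Sigma\cap B_{2R}}H^2\ee^{x_{n+1}}-\delta\int_{\Sigma\cap B_R}H^2\ee^{x_{n+1}},
\]
and for the grim reaper hyperplane the left term on the right is comparable to $R^{n-3}$ while the subtracted term is comparable to $\delta R^{n-1}$, so the right-hand side is negative for large $R$ regardless of $n$. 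This needs the two-sided comparability of $\int_{\Sigma\cap B_R}H^2\ee^{x_{n+1}}$ to $R^{n-1}$, which you have from the exact computation; an upper bound alone would not suffice. Stating the argument this way, rather than through the hypothesis of Lemma \ref{3.1.10}, closes the gap for all $n$.
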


In \cite{IR1}, Impera and Rimoldi proved a rigidity theorem that if an $L$-stable translator satisfies a weighted $L^2$ condition on the norm of the second fundamental form $|A|$, then it must be a hyperplane. By relaxing the condition on the weighted $L^2$ growth of $|A|$, we obtain the following rigidity result, improving Theorem A in \cite{IR1}.

\begin{theorem}\label{0.20}
Let $\Sigma^n \subset \mathbf{R}^{n+1}$ be a complete $L$-stable translator satisfying $H=-\lb{\mathbf{E}_{n+1},\mathbf{n}}$. If the norm of the second fundamental form satisfies the following weighted $L^2$ growth
\[
\int_{\Sigma\cap B_R} |A|^2 \ee^{x_{n+1}} \leq C_0 R^\alpha, \,\,\,\, \text{for any}\,\,\, R>1,
\]
where $C_0$ is a positive constant and $0\leq \alpha<2$, then $\Sigma$ is one of the following:
\begin{itemize}
\item [(1)] a hyperplane;
\item [(2)] the grim reaper $\Gamma$, when $n=1$;
\item [(3)] a grim reaper hyperplane, i.e., $\Gamma\times \mathbf{R}$, when $n=2$ and $1\leq \alpha<2$.
\end{itemize}
\end{theorem}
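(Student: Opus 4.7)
The plan is to combine $L$-stability with a Simons-type identity for translators and then exploit the prescribed weighted growth of $|A|^2$ to force equality in Kato's inequality, after which a rank-one rigidity together with the classification of planar translators yields the list. The $L$-stability assumption, obtained by integrating $-\phi L\phi\,\ee^{x_{n+1}}$ by parts against the weight, is equivalent to
\[
\int_\Sigma |A|^2\phi^2\,\ee^{x_{n+1}}\,d\mu \;\leq\; \int_\Sigma|\nabla\phi|^2\,\ee^{x_{n+1}}\,d\mu
\]
for every compactly supported Lipschitz $\phi$, and I would test it with $\phi=\eta|A|$ for a cutoff $\eta$ to be chosen in a moment.

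On the translator side, a direct computation using the translator equation together with the standard Simons identity yields $\mc{L}A=-|A|^2 A$ for the drift Laplacian $\mc{L}=\Delta+\lb{\mf{E}_{n+1},\nabla\cdot}$, which gives the Bochner-type identity
\[
|A|\,\mc{L}|A| \;=\; |\nabla A|^2-|\nabla|A||^2-|A|^4.
\]
Using that $\dv(\ee^{x_{n+1}}\nabla u)=\ee^{x_{n+1}}\mc{L}u$, I multiply this identity by $\eta^2$ and integrate by parts to express $\int_\Sigma\eta^2|\nabla A|^2\,\ee^{x_{n+1}}$ in terms of the $|A|^4$ term and a cross term. Substituting this into the stability inequality at $\phi=\eta|A|$, the two cross terms cancel identically and one obtains the clean estimate
\[
\int_\Sigma \eta^2\bigl(|\nabla A|^2-|\nabla|A||^2\bigr)\,\ee^{x_{n+1}}\,d\mu \;\leq\; \int_\Sigma |A|^2|\nabla\eta|^2\,\ee^{x_{n+1}}\,d\mu.
\]

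Now I choose $\eta$ equal to $1$ on $B_R\cap\Sigma$, vanishing off $B_{2R}\cap\Sigma$, with $|\nabla\eta|\leq C/R$. The hypothesis bounds the right-hand side by $CC_0 R^{\alpha-2}$, which tends to zero as $R\to\infty$ since $\alpha<2$; thus $|\nabla A|^2\equiv|\nabla|A||^2$ on $\Sigma$. For $n\geq 2$, wherever $|A|>0$, equality in Kato's inequality forces $\nabla_k A_{ij}=\mu_k A_{ij}$ for some $1$-form $\mu$. Combining this with the Codazzi symmetry $\nabla_k A_{ij}=\nabla_i A_{kj}$, the relation $\mu_k A_{ij}=\mu_i A_{kj}$ together with $A_{ij}=A_{ji}$ is readily shown by a few lines of index manipulation to force either $\mu\equiv 0$ (so $A$ is parallel) or $A=c\,\mu\otimes\mu$ of rank exactly one.

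The main obstacle is the final classification. In the parallel case, the structure theorem for complete hypersurfaces with parallel second fundamental form would split $\Sigma$ as a product of a round sphere and a Euclidean factor; the translator equation forbids spherical factors (they have constant $H\neq 0$ but non-constant $\lb{\mf{E}_{n+1},\mf{n}}$), forcing $A\equiv 0$ and $\Sigma$ to be a hyperplane. In the rank-one case, a Hartman--Nirenberg-type splitting writes $\Sigma=\gamma\times\mf{R}^{n-1}$ for a complete planar translator $\gamma$; since every such $\gamma$ is either a vertical line or the grim reaper $\Gamma$, the only new possibility is $\Sigma=\Gamma\times\mf{R}^{n-1}$. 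A direct computation on this product shows $|A|^2\ee^{x_{n+1}}\,d\mu=dx\,dz_1\cdots dz_{n-1}$, so $\int_{B_R\cap\Sigma}|A|^2\ee^{x_{n+1}}\sim R^{n-1}$; compatibility with the growth hypothesis $\leq C_0R^\alpha$, $\alpha<2$, restricts the cylindrical case to $n=1$ (the grim reaper itself) and $n=2$ with $1\leq\alpha<2$ (the grim reaper hyperplane), and leaves only the hyperplane for $n\geq 3$, which is exactly the list (1)--(3).
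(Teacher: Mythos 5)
Your proof is correct in outline but takes a genuinely different route from the paper's. The paper first uses the weighted Liouville--type uniqueness result (Lemma 3.2.5, applied via the drift operator $\mathcal{L}_g$) with $g=H$ and $h=|A|$: the growth hypothesis on $|A|^2$ controls $H^2$ (since $H^2\le n|A|^2$) and lets one show $H\equiv 0$ or $H$ has a sign, and then that $|A|=CH$; the classification is then cited from [MSS, Thm.~B] and [IR1]. You instead plug $\phi=\eta|A|$ directly into the stability inequality, integrate the Simons identity $|A|\,\mathcal{L}|A|=|\nabla A|^2-|\nabla|A||^2-|A|^4$ against $\eta^2$, and observe that the cross terms cancel exactly, leaving $\int\eta^2(|\nabla A|^2-|\nabla|A||^2)\ee^{x_{n+1}}\le\int|\nabla\eta|^2|A|^2\ee^{x_{n+1}}$. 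The quadratic sub-growth then forces Kato equality, and you finish with a rigidity analysis rather than a quoted classification. This buys you a self-contained argument that does not pass through the sign of $H$ or the citation, at the cost of having to carry out the Kato-rigidity structure theory by hand.

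Two points deserve to be made precise. First, the dichotomy you state after Kato equality (``$\mu\equiv0$, so $A$ parallel, or $A=c\,\mu\otimes\mu$ of rank one'') is a priori only pointwise on the open set $\{|A|>0\}$; you need the standard open-closed argument (on a component where $\nabla A=0$ the eigenvalues are locally constant, so the component is also closed; hence either $\operatorname{rank}A\ge 2$ everywhere and $A$ is globally parallel, or $\operatorname{rank}A\le 1$ everywhere) before you can split into two global cases, and in the rank-$\le 1$ case you should also note that $\{A=0\}$ having interior forces $\Sigma$ to be a hyperplane by unique continuation. Second, in the rank-one case Hartman--Nirenberg only gives $\Sigma=\gamma\times\mathbf{R}^{n-1}$ over a plane curve $\gamma$; the translator equation restricted to $\gamma$ reads $\kappa_\gamma=-\lb{E',\mathbf{n}}$ where $E'$ is the component of $\mathbf{E}_{n+1}$ in the plane of $\gamma$, so if $\mathbf{E}_{n+1}$ is not parallel to that plane you get a \emph{scaled} grim reaper and a tilted cylinder, not $\Gamma\times\mathbf{R}^{n-1}$. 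Those tilted cylinders must be excluded separately: the weight $\ee^{x_{n+1}}$ then grows exponentially along the $\mathbf{R}^{n-1}$ factor, so $\int_{B_R}|A|^2\ee^{x_{n+1}}$ grows exponentially in $R$, violating the hypothesis. Once this is stated, your computation $|A|^2\ee^{x_{n+1}}\,d\mu=dx\,dz$ on the genuine $\Gamma\times\mathbf{R}^{n-1}$ (using $\kappa=\cos x$, $\ee^{u}=\sec x$, $d\mu=\sec x\,dx\,dz$) and the resulting $\sim R^{n-1}$ growth correctly yield the restrictions to $n\le 2$ and $1\le\alpha<2$ for $n=2$, matching the theorem's list.
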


\begin{acknowledgements}
The author would like to thank Professor William Minicozzi for his valuable and constant support.
\end{acknowledgements}

\section{Background and Preliminaries}
In this section, we recall some background and useful identities for translators.

\subsection{Notion and conventions} 
Let $\Sigma^n \subset \mathbf{R}^{n+1}$ be a smooth hypersurface, $\Delta$ its Laplace operator, $A$ its  second fundamental form, and $H$=div$_{\Sigma}\mathbf{n}$ its  mean curvature.  If $e_i$ is an orthonormal frame for $\Sigma$, then the coefficients of the second fundamental form are given by $a_{ij}=\langle \nabla_{e_i}e_j,\mathbf{n}\rangle$.

For any hypersurface $\Sigma^n \subset \mathbf{R}^{n+1}$, the functional $\mathcal{F}$ is defined by
\[
\mathcal{F}(\Sigma)=\int_{\Sigma} \ee^{x_{n+1}}\,d\mu,
\]
where $x_{n+1}$ is the $(n+1)$-th coordinate of the position vector of $\Sigma$.

From the first variation of the functional $\mathcal{F}$, we know that the critical points are translators. The second variation formula of the functional $\mathcal{F}$ is the following.

\begin{lemma}[\cite{LS2}]\label{1.1.10}
Suppose $\Sigma^n \subset \mathbf{R}^{n+1}$ is a translator satisfying $H=-\lb{\mathbf{E}_{n+1},\mathbf{n}}$. If $\Sigma_s$ is a normal variation of $\Sigma$ with variation vector filed $\Sigma_0^{'}=f\mathbf{n}$, then 
\[
\frac{d^2}{ds^2}\Big|_{s=0}\mathcal{F}(\Sigma_s)=\int_\Sigma -\Big(\Delta f+|A|^2 f+\langle \mathbf{E}_{n+1},\nabla f\rangle\Big)f\ee^{x_{n+1}}\,d\mu.
\]
\end{lemma}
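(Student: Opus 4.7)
The plan is to compute the second variation by differentiating the first variation once more and exploiting the translator equation to kill most of the resulting terms. First I would establish the first variation formula: for a smooth normal variation with speed $f_s$ at time $s$, the pointwise identities $\partial_s \ee^{x_{n+1}} = f_s\langle \mathbf{n}_s, \mathbf{E}_{n+1}\rangle\, \ee^{x_{n+1}}$ and $\partial_s(dA_s) = f_s H_s\, dA_s$ (using the paper's convention $H = \dv_\Sigma \mathbf{n}$) combine to give
\begin{equation*}
\frac{d}{ds}\mathcal{F}(\Sigma_s) \;=\; \int_{\Sigma_s} f_s\bigl(H_s + \langle \mathbf{n}_s, \mathbf{E}_{n+1}\rangle\bigr)\ee^{x_{n+1}}\, dA_s.
\end{equation*}
The bracket vanishes at $s=0$ by the translator equation $H = -\langle \mathbf{E}_{n+1}, \mathbf{n}\rangle$, re-confirming that translators are the critical points of $\mathcal{F}$.

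Differentiating once more at $s=0$, the product rule produces three terms: one from $\partial_s f_s$, one from $\partial_s(\ee^{x_{n+1}} dA_s)$, and one from $\partial_s\bigl[H_s + \langle \mathbf{n}_s, \mathbf{E}_{n+1}\rangle\bigr]$. The first two are each multiplied by the factor $H + \langle \mathbf{E}_{n+1}, \mathbf{n}\rangle = 0$ and hence drop out, leaving
\begin{equation*}
\frac{d^2}{ds^2}\Big|_{s=0}\mathcal{F}(\Sigma_s) \;=\; \int_\Sigma f\,\frac{d}{ds}\Big|_{s=0}\bigl[H_s + \langle \mathbf{n}_s, \mathbf{E}_{n+1}\rangle\bigr]\, \ee^{x_{n+1}}\, d\mu.
\end{equation*}
To finish, I would invoke the classical first-variation identities for a normal variation with speed $f$,
\begin{equation*}
\partial_s H_s|_{s=0} \;=\; -\Delta f - |A|^2 f,
\qquad
\partial_s \mathbf{n}_s|_{s=0} \;=\; -\nabla^\Sigma f,
\end{equation*}
the second of which gives $\partial_s\langle \mathbf{n}_s, \mathbf{E}_{n+1}\rangle|_{s=0} = -\langle \mathbf{E}_{n+1}, \nabla f\rangle$ (note $\nabla^\Sigma f$ is tangent, so we may drop the superscript against $\mathbf{E}_{n+1}$). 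Substituting yields exactly $-\int_\Sigma f(\Delta f + |A|^2 f + \langle \mathbf{E}_{n+1}, \nabla f\rangle)\ee^{x_{n+1}}\,d\mu$ as claimed.

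The main obstacle is really just the careful sign check for $\partial_s H_s|_{s=0} = -\Delta f - |A|^2 f$: this classical identity (derived in Colding--Minicozzi's MCF lecture notes, for instance) depends sensitively on the conventions $H = \dv_\Sigma \mathbf{n}$ and $a_{ij} = \langle \nabla_{e_i}e_j, \mathbf{n}\rangle$ fixed in Section 2.1, and one should sanity-check it on a unit sphere with outward normal and $f \equiv 1$ (where the expanding sphere gives $\partial_s H = -n$, matching $-\Delta(1) - n\cdot 1 = -n$) to confirm the signs line up. Everything else is organizing the product rule and applying the translator equation; the tangential part of any (non-strictly-normal) variation contributes only an exact divergence that drops out for compactly supported test functions $f$.
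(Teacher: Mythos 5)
Your computation is correct, and the signs all check out under the paper's convention $H=\dv_\Sigma\mathbf{n}$ (in particular $\partial_s(d\mu_s)=fH\,d\mu_s$, $\partial_s\mathbf{n}=-\nabla f$, and $\partial_s H=-\Delta f-|A|^2f$, which your sphere sanity check confirms). The paper itself offers no proof of this lemma, deferring to the cited reference \cite{LS2}; your argument --- differentiate the first variation $\int f_s(H_s+\langle\mathbf{n}_s,\mathbf{E}_{n+1}\rangle)\ee^{x_{n+1}}\,dA_s$ once more and observe that the translator equation kills every term except the one where $H_s+\langle\mathbf{n}_s,\mathbf{E}_{n+1}\rangle$ is differentiated --- is exactly the standard second-variation computation one finds there, so there is nothing to add.
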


From the second variation, the stability operator $L$ is defined by
\begin{equation}\label{1.1.12}
L=\Delta+\lb{\mathbf{E}_{n+1},\nabla \cdot}+|A|^2,
\end{equation}
and the corresponding drifted operator $\mathcal{L}$ is defined by
\begin{equation}\label{1.1.14}
\mathcal{L}=\Delta+\lb{\mathbf{E}_{n+1},\nabla \cdot}=\ee^{-x_{n+1}}\text{div}(\ee^{x_{n+1}}\nabla \cdot).
\end{equation}
\begin{example}\label{1.1.13} 
We consider the stability operator $L$ for the grim reaper. The grim reaper $\Gamma$ is given by $\Gamma=(x,-\log\cos x)$, $x\in(-\pi/2,\pi/2)$. For any function $f(x)$ on $\Gamma$, by a simple computation, we have
\[
Lf=\Delta f+\lb{\mathbf{E}_{2},\nabla f}+|A|^2 f=(f_{xx}+f)\cos^2(x).
\]
In particular, if $f$ satisfies $Lf=0$, then 
\[
f(x)=a\cos x+b\sin x,
\]
for some $a,b\in \mathbf{R}$.
\end{example}

\begin{definition}
We say that a translator $\Sigma$ is $L$-stable, if for any compactly supported function $f$, we have
\[
\int_\Sigma (-fLf)\,\ee^{x_{n+1}}\,d\mu\geq 0.
\]
\end{definition}

It was proved by Shahriyari \cite{LS2} that all translating graphs are $L$-stable. Therefore, the grim reaper, the bowl solitons and hyperplanes are all $L$-stable translators.

In the next lemma, we recall some useful identities for translators.
\begin{lemma}\label{1.1.15}
If $\Sigma^n\subset \mathbf{R}^{n+1}$ is a translator satisfying $H=-\lb{\mathbf{E}_{n+1},\mathbf{n}}$, then 
\[
LA=0,
\]
\[ LH=0,
\]
\[ L|A|^2 = 2 |\nabla A|^2-|A|^4,
\]
\[ \mathcal{L}\,x_{n+1}=1.
\]
\end{lemma}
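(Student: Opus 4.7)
The plan is to work in a local orthonormal frame $\{e_i\}$ on $\Sigma$, chosen so that the intrinsic Christoffel symbols vanish at the point of computation, and to verify the four identities in the order $\mathcal{L}x_{n+1}=1$, $LA=0$, $LH=0$, $L|A|^2 = 2|\nabla A|^2 - |A|^4$. The core step is $LA=0$; the other three either come from a direct calculation or follow from it.

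First, for $\mathcal{L}x_{n+1}=1$, decompose $\mathbf{E}_{n+1}=\mathbf{E}_{n+1}^T - H\mathbf{n}$ using the translator equation $H=-\langle \mathbf{E}_{n+1},\mathbf{n}\rangle$. Then $\nabla^\Sigma x_{n+1}=\mathbf{E}_{n+1}^T$, so $\langle \mathbf{E}_{n+1},\nabla x_{n+1}\rangle = |\mathbf{E}_{n+1}^T|^2 = 1-H^2$, while $\Delta^\Sigma x_{n+1} = \langle \Delta^\Sigma \mathbf{x},\mathbf{E}_{n+1}\rangle = -H\langle \mathbf{n},\mathbf{E}_{n+1}\rangle = H^2$. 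Adding gives $1$.

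Next, the central step $LA=0$. I would start from Simons' identity for hypersurfaces in $\mathbf{R}^{n+1}$,
\[
\Delta a_{ij} = \nabla_i\nabla_j\tilde{H} + \tilde{H}\,a_{ik}a_{kj} - |A|^2 a_{ij},
\]
where $\tilde{H}=\text{tr}(A)=-H$ in the paper's sign convention (recall $H=\text{div}_\Sigma \mathbf{n}=-\text{tr}(A)$). Differentiating the translator equation twice and using the Weingarten relation $\nabla_i\mathbf{n}=-a_{ij}e_j$, the Gauss formula $\nabla_i^{\mathbf{R}^{n+1}}e_k = a_{ik}\mathbf{n}$ (at the basepoint), and Codazzi $\nabla_i a_{jk}=\nabla_k a_{ij}$, one obtains
\[
\nabla_i\nabla_j H = \langle \mathbf{E}_{n+1},\nabla a_{ij}\rangle - H\,a_{ik}a_{jk}.
\]
Substituting back into Simons, the two cubic-in-$A$ terms involving $H$ cancel by the symmetry $a_{jk}=a_{kj}$, leaving precisely $\Delta a_{ij}+\langle \mathbf{E}_{n+1},\nabla a_{ij}\rangle+|A|^2 a_{ij}=0$, i.e.\ $LA=0$.

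Finally, the remaining two identities follow quickly. Tracing $LA=0$ over $i=j$ and using $\text{tr}(A)=-H$ gives $LH=0$. For $L|A|^2$, use the Bochner-type identities $\Delta|A|^2 = 2a_{ij}\Delta a_{ij}+2|\nabla A|^2$ and $\langle \mathbf{E}_{n+1},\nabla|A|^2\rangle = 2a_{ij}\langle \mathbf{E}_{n+1},\nabla a_{ij}\rangle$, so that $L|A|^2 = 2a_{ij}\bigl(\Delta a_{ij}+\langle \mathbf{E}_{n+1},\nabla a_{ij}\rangle\bigr) + 2|\nabla A|^2 + |A|^4$; by $LA=0$ the bracketed quantity equals $-|A|^2 a_{ij}$, contributing $-2|A|^4$, and we arrive at $L|A|^2=2|\nabla A|^2-|A|^4$. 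The main obstacle in the whole argument is sign bookkeeping: the paper's convention $H=-\text{tr}(A)$ flips one term in the standard Simons identity, and one must carefully distinguish ambient from intrinsic derivatives when computing $\nabla_i\nabla_j H$ to get the cubic-in-$A$ term with the correct sign so that the cancellation producing $LA=0$ actually occurs.
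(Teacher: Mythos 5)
Your proposal is correct and follows essentially the same route as the paper: both arguments derive $LA=0$ by plugging the twice-differentiated translator equation (i.e.\ $\nabla_i\nabla_j H = \langle \mathbf{E}_{n+1},\nabla a_{ij}\rangle - H a_{ik}a_{jk}$, obtained via Weingarten/Gauss/Codazzi) into the hypersurface Simons identity and observing the cancellation of the cubic-in-$A$ terms, then obtain $LH=0$ by tracing, $L|A|^2$ by the Bochner expansion $\mathcal{L}|A|^2 = 2\langle A,\mathcal{L}A\rangle + 2|\nabla A|^2$, and $\mathcal{L}x_{n+1}=1$ from $\Delta x = -H\mathbf{n}$ together with $|\mathbf{E}_{n+1}^T|^2 = 1 - H^2$. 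The only differences are cosmetic: you introduce $\tilde H=\operatorname{tr}(A)$ to write Simons in its standard form while the paper absorbs the sign into its stated identity $\Delta A = -|A|^2A - HA^2 - \operatorname{Hess}_H$, and you verify the last identity first rather than last.
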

\begin{proof}
Recall that for a general hypersurface, the second fundamental form $A$ satisfies 
\begin{equation}\label{1.1.20}
\Delta A=-|A|^2A-HA^2-Hess_{H}.
\end{equation}
We fix a point $p\in \Sigma$, and choose a local orthonormal frame $e_i$ such that its
tangential covariant derivatives vanish. By the equation of translators, we obtain that
\begin{equation}\label{1.1.21}
\nabla_{e_j}\nabla_{e_i} H=a_{ij,k}\lb{\mathbf{E}_{n+1},e_k}-a_{ik}a_{jk}H.
\end{equation}
Note that $\lb{\mathbf{E}_{n+1},\nabla A}_{ij}=a_{ij,k}\lb{\mathbf{E}_{n+1},e_k}$. Combining (\ref{1.1.20}) and (\ref{1.1.21}) gives
\[
(LA)_{ij}=(\Delta A)_{ij}+|A|^2a_{ij}+\lb{\mathbf{E}_{n+1},\nabla A}_{ij}=0.
\]
This is the first identity. Taking the trace gives the second identity. For the third identity, we have
\[
L|A|^2=\mathcal{L}|A|^2+|A|^4=2\lb{A,\mathcal{L}A}+2|\nabla A|^2+|A|^4=2|\nabla A|^2-|A|^4.
\]
For the last identity, recall that in general we have $\Delta x=-H\mathbf{n}$. Hence,
\[
\mathcal{L}x_{n+1}=\Delta x_{n+1}+\lb{\mathbf{E}_{n+1},\nabla x_{n+1}}=\lb{-H\mathbf{n},\mathbf{E}_{n+1}} + |\mathbf{E}_{n+1}^T|^2=1.\]
\end{proof}

We conclude this section with the following lemma which shows that the operator $\mathcal{L}$ is self-adjoint in a weighted $L^2$ space.

\begin{lemma}\label{1.1.30}
If $\Sigma^n\subset \mathbf{R}^{n+1}$ is a translator satisfying $H=-\lb{\mathbf{E}_{n+1},\mathbf{n}}$, $u$ is a $C^2$ function with compact support, and $v$ is a $C^2$ function, then 
\[
-\int_{\Sigma} u(\mathcal{L}v)\ee^{x_{n+1}}=\int_{\Sigma} \lb{\nabla u,\nabla v}\ee^{x_{n+1}}.
\]
\end{lemma}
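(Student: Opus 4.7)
The plan is to exploit the weighted-divergence form of $\mathcal{L}$ given in (\ref{1.1.14}), which rewrites $\mathcal{L}v$ as $\ee^{-x_{n+1}}\,\dv(\ee^{x_{n+1}}\nabla v)$. Multiplying by the weight, we get the clean identity
\[
(\mathcal{L}v)\,\ee^{x_{n+1}} = \dv\!\left(\ee^{x_{n+1}}\nabla v\right),
\]
which converts the weighted operator into an ordinary divergence on $\Sigma$ and positions the identity for a standard integration by parts.

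Next, I would apply the product rule to the vector field $W = u\,\ee^{x_{n+1}}\nabla v$ on $\Sigma$, obtaining
\[
\dv(W) = u\,\dv\!\left(\ee^{x_{n+1}}\nabla v\right) + \ee^{x_{n+1}}\lb{\nabla u,\nabla v} = u(\mathcal{L}v)\ee^{x_{n+1}} + \ee^{x_{n+1}}\lb{\nabla u,\nabla v}.
\]
Integrating this over $\Sigma$ and rearranging yields the desired formula, provided the integral of $\dv(W)$ vanishes.

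The only step requiring care is justifying that $\int_\Sigma \dv(W)\,d\mu = 0$. Since $u$ is $C^2$ with compact support, $W$ is a compactly supported $C^1$ vector field tangent to $\Sigma$ (because $\nabla v$ denotes the intrinsic gradient on $\Sigma$, and $\ee^{x_{n+1}}$ is smooth), so the classical divergence theorem on $\Sigma$ (applied on any open domain containing $\operatorname{supp} u$) gives a vanishing boundary integral. There is no regularity or decay issue on $v$, since it is never differentiated outside $\operatorname{supp} u$ in the final expression. This step is essentially routine — the main point is just lining up the weighted divergence structure of $\mathcal{L}$ correctly — so no genuine obstacle is expected.
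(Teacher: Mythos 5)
Your argument is correct and is exactly the paper's proof: the paper invokes Stokes' theorem together with the weighted-divergence form $\mathcal{L}=\ee^{-x_{n+1}}\dv(\ee^{x_{n+1}}\nabla\cdot)$ from (\ref{1.1.14}), which is precisely what you unpack via the product rule applied to $u\,\ee^{x_{n+1}}\nabla v$.
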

\begin{proof}
The lemma follows immediately from Stokes' theorem and (\ref{1.1.14}).
\end{proof}

\section{volume growth}
In this section, we consider the volume growth for translators and show that every properly immersed translator has at least linear volume growth.

Suppose $\Sigma^n \subset \mathbf{R}^{n+1}$ is a properly immersed translator. For any $x_0\in \Sigma$, let $B_r(x_0)$ be the extrinsic ball in $\mathbf{R}^{n+1}$, and denote the volume and the weighed volume of $\Sigma\cap B_r(x_0)$ by 
\[
V(r)=\text{Vol}(\Sigma\cap B_r(x_0))=\int_{\Sigma\cap B_r(x_0)} d\mu,
\]
and
\[
\tilde{V}(r)=\int_{\Sigma\cap B_r(x_0)} \ee^{x_{n+1}} d\mu.\]

We will first show that the weighed volume has at least exponential growth, and this relies on the following key ingredient.

\begin{lemma}\label{4.2.1}
If $\Sigma^n\subset \mathbf{R}^{n+1}$ is a translator satisfying $H=-\lb{\mathbf{E}_{n+1},\mathbf{n}}$, then 
\[
\Delta\ee^{x_{n+1}}=\ee^{x_{n+1}}.
\]
\end{lemma}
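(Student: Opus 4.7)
\textbf{Proof plan for Lemma~\ref{4.2.1}.}  The plan is to apply the chain rule for the Laplacian on $\Sigma$, namely
\[
\Delta \ee^{x_{n+1}} = \ee^{x_{n+1}}\bigl(\Delta x_{n+1} + |\nabla x_{n+1}|^2\bigr),
\]
and then show that the parenthesized expression equals $1$ by using the translator equation.

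First I would compute $\Delta x_{n+1}$.  The standard identity $\Delta x = -H\mathbf{n}$ for the position vector on a hypersurface, applied componentwise, gives $\Delta x_{n+1} = -H \lb{\mathbf{E}_{n+1},\mathbf{n}}$. Plugging in the translator equation $H = -\lb{\mathbf{E}_{n+1},\mathbf{n}}$ yields $\Delta x_{n+1} = H^{2}$.

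Next I would compute the gradient term.  Since $\nabla x_{n+1}$ is the tangential component $\mathbf{E}_{n+1}^{T}$ of $\mathbf{E}_{n+1}$, we have
\[
|\nabla x_{n+1}|^{2} = |\mathbf{E}_{n+1}|^{2} - \lb{\mathbf{E}_{n+1},\mathbf{n}}^{2} = 1 - H^{2},
\]
where the last equality again uses the translator equation.  Adding the two contributions gives $\Delta x_{n+1} + |\nabla x_{n+1}|^{2} = 1$, so
\[
\Delta \ee^{x_{n+1}} = \ee^{x_{n+1}}.
\]

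There is no essential obstacle here; the lemma is a direct calculation combining the translator equation with a general identity for $\Delta x$.  As a cross-check, one may also derive this from the already proven identity $\mathcal{L}x_{n+1}=1$ of Lemma~\ref{1.1.15}: observing that $\lb{\mathbf{E}_{n+1},\nabla x_{n+1}} = \lb{\mathbf{E}_{n+1},\mathbf{E}_{n+1}^{T}} = |\mathbf{E}_{n+1}^{T}|^{2} = |\nabla x_{n+1}|^{2}$, so $\Delta x_{n+1}+|\nabla x_{n+1}|^{2} = \mathcal{L}x_{n+1} = 1$, and the chain rule then gives the same conclusion.
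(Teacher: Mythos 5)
Your proof is correct and follows essentially the same route as the paper: expand $\Delta \ee^{x_{n+1}}$ via the chain rule (equivalently the product rule for $\dv_\Sigma(\ee^{x_{n+1}}\nabla_\Sigma x_{n+1})$), then combine $\Delta x_{n+1}=H^2$ with $|\nabla x_{n+1}|^2=1-H^2$. Your cross-check via $\mathcal{L}x_{n+1}=1$ is in fact exactly how the paper cites Lemma~\ref{1.1.15} to close the argument.
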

\begin{proof}
First, we have
\[
\Delta\ee^{x_{n+1}}=\dv_\Sigma(\ee^{x_{n+1}}\nabla_\Sigma x_{n+1})=\ee^{x_{n+1}}|\mathbf{E}_{n+1}^T|^2+\ee^{x_{n+1}}\Delta x_{n+1}.
\]
Hence, the identity follows from Lemma \ref{1.1.15} and the fact that $|\mathbf{E}_{n+1}^T|^2=1-H^2$.
\end{proof}

For simplicity, we may assume $x_0=0$. By the co-area formula, we have
\[
\tilde{V}(r)=\int_0^r \int_{\partial B_s \cap \Sigma} \ee^{x_{n+1}}\frac{1}{|\nabla_\Sigma|x||}\,ds.
\]
Note that $\nabla_\Sigma |x|^2=2x^T=2|x|\nabla_{\Sigma}|x|$, so we get
\begin{equation}\label{4.2.10}
\tilde{V}'(r)=\int_{\partial B_r \cap \Sigma} \ee^{x_{n+1}} \frac{|x|}{|x^T|}.
\end{equation}
On the other hand, by Lemma \ref{4.2.1}, we obtain that
\begin{equation}\label{4.2.11}
\begin{split}
\tilde{V}(r)=\int_{\Sigma\cap B_r} \Delta\ee^{x_{n+1}} d\mu & =\int_{\partial B_r \cap \Sigma} \big\langle \nabla_\Sigma \ee^{x_{n+1}},\frac{x^T}{|x^T|}\big\rangle \\ &=\int_{\partial B_r \cap \Sigma} \big\langle \nabla_\Sigma x_{n+1}, \frac{x^T}{|x^T|}\big\rangle \ee^{x_{n+1}}.
\end{split}
\end{equation}
Combining (\ref{4.2.10}) and (\ref{4.2.11}), we conclude that for any $r>0$,
\begin{equation}\label{4.2.13}
\tilde{V}(r)\leq \tilde{V}'(r).
\end{equation}
This implies the quantity $\tilde{V}(r)\ee^{-r}$ is monotone non-decreasing. We summarize this result in the following proposition.
\begin{proposition}\label{4.2.15}
Let $\Sigma^n \subset \mathbf{R}^{n+1}$ be a complete properly immersed translator. Then for any $x\in \Sigma$, there exists a constant $C>0$ such that
\[
\tilde{V}(r)=\int_{\Sigma\cap B_r(x)} \ee^{x_{n+1}} d\mu \geq C\ee^r, \,\,\,\,\, \text{for all}\,\,r\geq 1.\]
\end{proposition}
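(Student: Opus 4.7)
The plan is to turn the identity from Lemma \ref{4.2.1} into a first-order differential inequality for $\tilde V$ and integrate. The only geometric input is $\Delta \ee^{x_{n+1}} = \ee^{x_{n+1}}$; the rest is a standard ODE/integration-by-parts argument and is already essentially outlined in equations (\ref{4.2.10})--(\ref{4.2.13}) of the paper. I would structure the proof in two steps: deriving $\tilde V(r) \leq \tilde V'(r)$, and then Gronwall.

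First I would assume $x = 0$ (by translating) and, invoking properness together with Sard's theorem applied to $|x|^2\colon\Sigma\to\mathbf{R}$, restrict attention to the full-measure set of $r>0$ for which $\Sigma\cap\partial B_r$ is a smooth compact hypersurface of $\Sigma$. For those $r$, Stokes' theorem applied to the compact set $\Sigma\cap B_r$, combined with Lemma \ref{4.2.1}, gives
\[
\tilde V(r) \;=\; \int_{\Sigma\cap B_r} \Delta \ee^{x_{n+1}}\, d\mu \;=\; \int_{\Sigma\cap\partial B_r} \Blb{\nabla_\Sigma \ee^{x_{n+1}},\,\tfrac{x^T}{|x^T|}},
\]
while the co-area formula together with $\nabla_\Sigma|x|^2 = 2x^T$ gives
\[
\tilde V'(r) \;=\; \int_{\Sigma\cap\partial B_r} \ee^{x_{n+1}}\,\tfrac{|x|}{|x^T|}.
\]
Comparing integrands pointwise via Cauchy--Schwarz, $|\nabla_\Sigma x_{n+1}| = |\mathbf{E}_{n+1}^T|\leq 1$, and $|x^T|\leq|x|$ shows $\lb{\nabla_\Sigma x_{n+1}, x^T/|x^T|}\leq 1 \leq |x|/|x^T|$, hence $\tilde V(r)\leq \tilde V'(r)$ for a.e. $r>0$.

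Second, since $\tilde V$ is absolutely continuous and non-decreasing, this ODE inequality implies $(\ee^{-r}\tilde V(r))'\geq 0$ a.e., so $\ee^{-r}\tilde V(r)$ is non-decreasing on $(0,\infty)$. For any $r\geq 1$ this gives $\tilde V(r)\geq \ee^{r-1}\tilde V(1)$, and $\tilde V(1)>0$ because $0\in\Sigma$ and the weight $\ee^{x_{n+1}}$ is positive on the nonempty open neighborhood $\Sigma\cap B_1(0)$. Setting $C = \ee^{-1}\tilde V(1)$ finishes the proof.

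The main (and really only) subtlety is justifying the boundary integration by parts on a properly immersed, not embedded, hypersurface. This is exactly where the properness hypothesis is used: it makes $\Sigma\cap B_r$ compact so that Stokes' theorem applies, and Sard's theorem supplies a full-measure set of regular values of $|x|$ for which $\Sigma\cap\partial B_r$ is a smooth boundary. Everything geometric beyond that is packaged inside Lemma \ref{4.2.1}, which is why the result holds in this generality.
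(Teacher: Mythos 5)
Your proposal is correct and follows essentially the same route as the paper: you use Lemma \ref{4.2.1} together with the co-area formula and Stokes' theorem to compare the two boundary integrals, deduce $\tilde V(r)\leq \tilde V'(r)$, and integrate the resulting differential inequality. The only difference is that you spell out more carefully the technical justifications (Sard's theorem for regular values of $|x|^2$, absolute continuity of $\tilde V$, positivity of $\tilde V(1)$), which the paper leaves implicit.
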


With the help of Proposition \ref{4.2.15}, we can now estimate the volume growth. For any $R>1$, we have
\[
\tilde{V}'(R)\leq \ee^R \int_{\partial B_R \cap \Sigma} \frac{|x|}{|x^T|} = \ee^R V'(R).
\]
Combining this with Proposition \ref{4.2.15} and (\ref{4.2.13}) gives that $V(R)$ has at least linear growth.
\begin{theorem}
Let $\Sigma^n \subset \mathbf{R}^{n+1}$ be a complete properly immersed translator. Then for any $x\in \Sigma$, there exists a constant $C>0$ such that
\[
\text{Vol}(\Sigma\cap B_r(x))=\int_{\Sigma\cap B_r(x)} d\mu \geq Cr \,\,\,\,\, \text{for all}\,\,r\geq 1.\]
\end{theorem}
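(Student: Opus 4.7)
The plan is to transfer the exponential lower bound on the weighted volume $\tilde V(r)$ supplied by Proposition \ref{4.2.15} into an unweighted lower bound on the derivative $V'(r)$, using only the crude pointwise comparison $\ee^{x_{n+1}} \leq \ee^{|x|}$ on a Euclidean sphere centered at the chosen basepoint.

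First I would fix $x_0 \in \Sigma$ and, for notational convenience, take $x_0 = 0$, then invoke Proposition \ref{4.2.15} to obtain $\tilde V(r) \geq C_1 \ee^r$ for all $r \geq 1$, for some $C_1 > 0$ (this is where properness enters, ensuring $\tilde V(1) > 0$). Next I would compare $\tilde V'$ with $V'$ pointwise on the sphere of radius $R$: the very co-area computation used to derive (\ref{4.2.10}) also yields $V'(R) = \int_{\partial B_R \cap \Sigma} |x|/|x^T|$, and since $x_{n+1} \leq |x| = R$ on $\partial B_R$, one immediately gets $\tilde V'(R) \leq \ee^R V'(R)$.

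Chaining this estimate with (\ref{4.2.13}) produces
\[
V'(R) \;\geq\; \ee^{-R}\,\tilde V'(R) \;\geq\; \ee^{-R}\,\tilde V(R) \;\geq\; C_1
\]
for every $R \geq 1$, and integrating from $1$ to $r$ yields $V(r) \geq V(1) + C_1(r - 1) \geq Cr$ for a suitable $C > 0$ and all $r \geq 1$. There is essentially no real obstacle at this stage: the substantive content has already been packaged into the identity $\Delta\ee^{x_{n+1}} = \ee^{x_{n+1}}$ of Lemma \ref{4.2.1} (which forced the weighted volume to grow at least exponentially), and the final step merely exchanges one factor of $\ee^R$ between the integrand and the weight.
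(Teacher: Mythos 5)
Your proposal follows the paper's argument step for step: invoke Proposition~\ref{4.2.15} for the exponential lower bound on $\tilde V$, use $x_{n+1}\le |x|=R$ on $\partial B_R$ to get $\tilde V'(R)\le \ee^R V'(R)$, chain with the monotonicity inequality (\ref{4.2.13}) to conclude $V'(R)\ge C_1$, and integrate. This is precisely the paper's proof, just with the final bookkeeping written out explicitly.
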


\section{Entropy of the grim reaper and the bowl solitions}
The aim of this section is to estimate the entropy of the grim reaper and the bowl solitions, i.e., proving Theorem \ref{0.9} and Theorem \ref{0.10}.  Moreover, using a regularity result with Allard's compactness theorem, we will give a curvature estimate for translators with small entropy in $\mathbf{R}^3$.

\subsection{Huisken's monotonicity formula}
We state the main ingredient Huisken's monotonicity formula \cite{Hui90}. First we define the function $\Phi$ on $\mathbf{R}^{n+1} \times (-\infty,0)$ by 
\[
\Phi(x,t)=(-4\pi t)^{-\frac{n}{2}} \textrm{e}^{\frac{|x|^2}{4t}}
\]
and then set $\Phi_{z_0,\tau}(x,t)=\Phi(x-z_0,t-\tau)$. Huisken proved the following monotonicity formula for MCF:

\begin{theorem}[\cite{Hui90}]\label{2.1.2}
If $M_t$ is a solution to MCF and $u$ is a $C^2$ function, then we have
\[
\frac{d}{dt} \int_{M_t} u \Phi_{(z_0,\tau)}=-\int_{M_t} \left| H  \mathbf{n}- \frac{(x-z_0)^{\perp}
}{2(\tau-t)}\right|^2 u \Phi_{(z_0,\tau)}+\int_{M_t} (u_t-\Delta u)\Phi_{(z_0,\tau)}.
\]
\end{theorem}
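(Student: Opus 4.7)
The plan is to differentiate $\int_{M_t} u\,\Phi_{z_0,\tau}\,d\mu$ directly under the integral sign, keeping careful track of the three time-dependent pieces: the function $u$ (carried along by the flow), the ambient kernel $\Phi$ (depending on $t$ both explicitly and through the moving point $x(t)$), and the area element $d\mu$. The standard MCF variation formulas give $\partial_t(d\mu)=-H^2\,d\mu$, and the material derivative along the flow of an ambient function $g(x,t)$ is $\partial_t g-H\lb{\mf{n},\nabla g}$. Combined via the Leibniz rule, these produce an expansion of $\tfrac{d}{dt}\int_{M_t} u\Phi$ into four contributions involving $u_t\Phi$, $u\,\partial_t\Phi$, $-uH\lb{\mf{n},\nabla\Phi}$, and $-uH^2\Phi$.

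The core of the argument is the pointwise identity
\[
\partial_t\Phi+\Delta_{M_t}\Phi-H\lb{\mf{n},\nabla\Phi}-H^2\Phi=-\Big|H\mf{n}-\frac{(x-z_0)^\perp}{2(\tau-t)}\Big|^2\Phi,
\]
which I would verify by a direct pointwise computation. The inputs are $\nabla\Phi=\tfrac{x-z_0}{2(t-\tau)}\Phi$, the explicit $\partial_t\Phi$ coming from the $n$-dimensional Gaussian normalization of $\Phi$, and the standard relation $\Delta_{M_t}\Phi=\Delta_{\mathbf{R}^{n+1}}\Phi-\text{Hess}\,\Phi(\mf{n},\mf{n})-H\lb{\mf{n},\nabla\Phi}$ between the intrinsic and ambient Laplacians of an ambient function. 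Writing $|x-z_0|^2=|(x-z_0)^T|^2+\lb{x-z_0,\mf{n}}^2$, the tangential contributions cancel, and the surviving terms organize themselves into the indicated perfect square after recalling $(x-z_0)^\perp=\lb{x-z_0,\mf{n}}\mf{n}$.

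Finally I would substitute the identity back into the Leibniz expansion, which replaces $u\,\partial_t\Phi$ by $-u\Delta_{M_t}\Phi+uH\lb{\mf{n},\nabla\Phi}+uH^2\Phi$ minus $u$ times the perfect square. The auxiliary $uH\lb{\mf{n},\nabla\Phi}$ and $uH^2\Phi$ terms cancel against their counterparts already present in the expansion, and one integration by parts transfers the intrinsic Laplacian from $\Phi$ onto $u$ (legitimate because $u$ is $C^2$ with compact support and $\Phi$ and its derivatives decay Gaussianly at infinity, so there is no boundary term). The result is precisely the stated monotonicity formula. The only real care required is bookkeeping with signs, since $t-\tau<0$ while $\tau-t>0$; no deeper analytical obstacle arises.
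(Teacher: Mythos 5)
Your derivation is correct and is essentially Huisken's original argument: differentiate under the integral with the MCF variation formulas for $d\mu$ and for an ambient function, verify the pointwise backward-heat-kernel identity for the $n$-dimensionally normalized Gaussian $\Phi$ restricted to the hypersurface, substitute and integrate by parts. Note that the paper does not actually prove this theorem; it simply cites it from Huisken's 1990 paper, so there is no in-text proof to compare against. One small point worth being aware of: the theorem hypothesis only assumes $u$ is $C^2$, not compactly supported (indeed the paper immediately applies it with $u\equiv 1$), so the integration by parts that moves $\Delta_{M_t}$ from $\Phi$ to $u$ must be justified by the Gaussian decay of $\Phi$ together with mild growth control on $u$, $\nabla u$ and on the area of $M_t$ in balls, rather than by compact support of $u$; this is standard but is the one place where the ``no deeper analytical obstacle'' remark papers over a hypothesis.
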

When $u$ is identically one, we get 
\begin{equation}\label{2.1.3}
\frac{d}{dt} \int_{M_t}   \Phi_{(z_0,\tau)}=
-\int_{M_t} \left| H \mathbf{n}-\frac{(x-z_0)^{\perp}
}{2(\tau-t)}\right|^2 \,\Phi_{(z_0,\tau)}
 \, .
\end{equation}
The $F$-functional can be expressed as the following:
\[
F_{z_0,t_0}(\Sigma)=(4\pi t_0)^{-\frac{n}{2}} \int_\Sigma \textrm{e}^{-\frac{|x-z_0|^2}{4t_0}}=\int_\Sigma \Phi_{z_0,t_0}(x,0).
\]
Moreover, for any $z_0$ in $\mathbf{R}^{n+1}$ and $t_0>0$, if $M_t$ gives a MCF and suppose $t>s$, then by Huisken's monotonicity formula (\ref{2.1.3}), we have
\begin{equation}\label{2.1.5}
F_{z_0,t_0}(M_t)\leq F_{z_0,t_0+(t-s)}(M_s).
\end{equation}
A direct consequence of (\ref{2.1.5}) is that the entropy is non-increasing under MCF.

\vspace{2mm}
Now we apply Huisken's monotonicity formula to translating solitons.

The grim reaper or the bowl solition $\Gamma$ gives a solution of MCF  $\Gamma_t=\{(x,t+f(x))\in \mathbf{R}^{n+1}: x\in \mathbf{R}^{n}\}$\,(for $n=1$, $x \in (-\pi/2,\pi/2)$\,) for all $t\in \mathbf{R}$.

For any point  $(x_0,y_0)\in \mathbf{R}^{n+1}\, (x_0 \in \mathbf{R}^{n}, y_0 \in \mathbf{R})$ and $t_0>0$, we have the following identity:
\begin{equation}\label{2.1.6}
F_{(x_0,y_0),\,t_0}(\Gamma_t)=F_{(x_0,y_0-t),\,t_0}(\Gamma).
\end{equation}
By (\ref{2.1.5}), for any $t>s$, we get 
\[
F_{(x_0,y_0),\,t_0}(\Gamma_t)\leq F_{(x_0,y_0),\,t_0+(t-s)}(\Gamma_s).
\]
Combining this with (\ref{2.1.6}) gives
\[
F_{(x_0,y_0-t),\,t_0}(\Gamma)\leq F_{(x_0,y_0-s),\,t_0+(t-s)}(\Gamma).
\]

Therefore, we obtain the following crucial lemma.
\begin{lemma}\label{2.1.8}
For any point $\big((x_0,y_0),\,t_0\big)\in \mathbf{R}^{n+1} \times (0,\infty)$ and arbitrary $N>0$, we have  
\[
F_{(x_0,y_0),\,t_0}(\Gamma)\leq F_{(x_0,y_0+N),\,t_0+N}(\Gamma).
\]
\end{lemma}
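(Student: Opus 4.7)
The claim is essentially a direct substitution into the two inequalities the paper has already assembled. The plan is to apply Huisken's monotonicity formula \eqref{2.1.5} to the MCF $\{\Gamma_\tau\}_{\tau \in \mathbf{R}}$ that the translator generates, then rewrite the output using the translation identity \eqref{2.1.6}, and finally specialize to a pair of times matching the desired form.

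First I would fix the basepoint $z_0 = (x_0, y_0)$ and scale $t_0$, and apply \eqref{2.1.5} with $M_\tau = \Gamma_\tau$ at arbitrary times $s < t$, obtaining
\[
F_{(x_0, y_0), t_0}(\Gamma_t) \leq F_{(x_0, y_0), t_0 + (t - s)}(\Gamma_s).
\]
Next I would invoke \eqref{2.1.6} on both sides to turn each $F$-value on the moving hypersurface into an $F$-value on the fixed soliton $\Gamma$ at a shifted center, yielding
\[
F_{(x_0, y_0 - t), t_0}(\Gamma) \leq F_{(x_0, y_0 - s), t_0 + (t - s)}(\Gamma).
\]

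To conclude, I would specialize to $t = 0$ and $s = -N$; since $N > 0$ the constraint $s < t$ still holds, and direct substitution gives
\[
F_{(x_0, y_0), t_0}(\Gamma) \leq F_{(x_0, y_0 + N), t_0 + N}(\Gamma),
\]
which is exactly the statement. There is no genuine obstacle: the translation identity \eqref{2.1.6} is a trivial change of variables, and Huisken's monotonicity applies verbatim because $\{\Gamma_\tau\}$ is an honest smooth MCF. The one conceptual point worth flagging is that, although we only want a statement about the stationary soliton $\Gamma$, the argument exploits its dynamic realization as a MCF: the freedom to take $s < 0$ is precisely what lets us simultaneously increase the scale $t_0$ by $N$ and shift the center upward by $N$ along $\mathbf{E}_{n+1}$.
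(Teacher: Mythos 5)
Your proof is correct and follows essentially the same route as the paper: combine Huisken's monotonicity \eqref{2.1.5} with the translation identity \eqref{2.1.6} to get $F_{(x_0,y_0-t),\,t_0}(\Gamma)\leq F_{(x_0,y_0-s),\,t_0+(t-s)}(\Gamma)$, then specialize. The only difference is that you make the final substitution $t=0$, $s=-N$ explicit, which the paper leaves implicit.
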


\subsection{The entropy of the grim reaper }
Recall that the entropy is taking the supremum of the $F$-functional, so we will first consider the upper bound of the $F$-functional $F_{(x_0,y_0+N),\,t_0+N}(\Gamma)$ by taking $N \rightarrow +\infty$, and this will yield that the entropy of the grim reaper is less than or equal to 2.

For any fixed point $(x_0,y_0)\in \mathbf{R}^{2}\, $ and $t_0>0$, by Lemma \ref{2.1.8}, we may choose $N$ sufficiently large such that 
\[
F_{(x_0,y_0),\,t_0}(\Gamma)\leq F_{(x_0,N+\delta),\,N}(\Gamma),
\]
where $\delta=y_0-t_0$.

By the equation of the grim reaper (\ref{0.6}), we have 
\[
F_{(x_0,N+\delta),\,N}(\Gamma)=(4\pi N)^{-\frac{1}{2}} \int_{-\frac{\pi}{2}}^{\frac{\pi}{2}} \textrm{e}^{-\frac{(x-x_0)^2+(\log\cos x+N+\delta)^2}{4N}} \frac{1}{\cos x}\,dx.
\]
Note that
\[
 F_{(x_0,N+\delta),\,N}(\Gamma) \leq (\pi N)^{-\frac{1}{2}} \int_{0}^{\frac{\pi}{2}} \textrm{e}^{-\frac{(\log\cos x+N+\delta)^2}{4N}} \frac{1}{\cos x}\,dx.
\]
Now we define a function $g(N)$ by 
\[
g(N)=(\pi N)^{-\frac{1}{2}} \int_{0}^{\frac{\pi}{2}} \textrm{e}^{-\frac{(\log\cos x+N+\delta)^2}{4N}} \frac{1}{\cos x}\,dx.
\]
We make a change of variable and let $u=-\log\cos x$ and $t=u/N$. This gives
\[
\begin{split}
g(N^2)&=\frac{1}{N\sqrt{\pi}}\int_{0}^{\infty} \textrm{e}^{-\frac{(u-N^2-\delta)^2}{4N^2}} \frac{\textrm{e}^u}{\sqrt{\textrm{e}^{2u}-1}}\,du \\ &=\frac{1}{\sqrt{\pi}} \int_{0}^{\infty} \textrm{e}^{-\frac{(N^2+\delta)^2}{4N^2}}\,\textrm{e}^{-\frac{t^2}{4}+(\frac{3N}{2}+\frac{\delta}{2N})t} \frac{1}{\sqrt{\textrm{e}^{2Nt}-1}}\,dt.
\end{split}
\]

Next we estimate the value of $g(N^2)$ when $N$ goes to infinity by splitting it into two parts, $A$ and $B$.

For the first part,
\begin{equation}\label{2.2.8}
\begin{split}
A&=\frac{1}{\sqrt{\pi}} \int_{0}^{\frac{N}{6}} \textrm{e}^{-\frac{(N^2+\delta)^2}{4N^2}}\,\textrm{e}^{-\frac{t^2}{4}+(\frac{3N}{2}+\frac{\delta}{2N})t} \frac{1}{\sqrt{\textrm{e}^{2Nt}-1}}\,dt\\ &\leq \frac{1}{\sqrt{\pi}} \textrm{e}^{-\frac{(N^2+\delta)^2}{4N^2}+\frac{N^2}{4}+\frac{\delta}{12}} \int_{0}^{\frac{N}{6}}  \frac{1}{\sqrt{\textrm{e}^{2Nt}-1}}\,dt  \\ &\leq \frac{1}{2N\sqrt{\pi}} \textrm{e}^{-\frac{\delta^2}{4N^2}-\frac{5}{12}\delta} \int_{0}^{\frac{N^2}{3}}  \frac{1}{\sqrt{\textrm{e}^{t}-1}}\,dt \, \leq \frac{\pi}{2N\sqrt{\pi}}  \textrm{e}^{-\frac{\delta^2}{4N^2}-\frac{5}{12}\delta}. 
\end{split}
\end{equation}
Here we use the fact that 
\[
\int_{0}^{\infty}  \frac{1}{\sqrt{\textrm{e}^{t}-1}}\,dt=\pi.
\]

For the second part,
\begin{equation}\label{2.2.9}
\begin{split}
B&=\frac{1}{\sqrt{\pi}} \int_{\frac{N}{6}}^{\infty} \textrm{e}^{-\frac{(N^2+\delta)^2}{4N^2}}\,\textrm{e}^{-\frac{t^2}{4}+(\frac{3N}{2}+\frac{\delta}{2N})t} \frac{1}{\sqrt{\textrm{e}^{2Nt}-1}}\,dt \\ &\leq \frac{1}{\sqrt{\pi}}\big(1+2\,\textrm{e}^{-\frac{N^2}{{6}}}\big) \int_{\frac{N}{6}}^{\infty} \textrm{e}^{-\frac{1}{4}(t-N-\frac{\delta}{N})^2}\,dt \\ &\leq \frac{1}{\sqrt{\pi}}\big(1+2\,\textrm{e}^{-\frac{N^2}{{6}}}\big) \int_{-\frac{5N}{6}-\frac{\delta}{N}}^{\infty} \textrm{e}^{-\frac{t^2}{4}}\,dt \,\leq 2\big(1+2\,\textrm{e}^{-\frac{N^2}{{6}}}\big).
\end{split}
\end{equation}
The last inequality uses 
\[
\frac{1}{\sqrt{\pi}} \int_{-\infty}^{\infty} e^{-\frac{t^2}{4}}\,dt=2.
\]
Taking $N \rightarrow +\infty$, by Lemma \ref{2.1.8}, (\ref{2.2.8}) and (\ref{2.2.9}), we obtain the following result.

\begin{lemma}\label{2.2.10}
For any point $((x_0,y_0),\,t_0)\in \mathbf{R}^2 \times (0,\infty)$, we have 
\[
F_{(x_0,y_0),\,t_0}(\Gamma)\leq 2,\,\,\,\,
\text{i.e.},\,\,\,\,\lambda(\Gamma) \leq 2.
\]
\end{lemma}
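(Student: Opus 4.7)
The plan is to apply Lemma \ref{2.1.8} to push the basepoint in the direction of translation, reducing the problem to a single-variable asymptotic estimate. Fix an arbitrary basepoint $(x_0, y_0, t_0)$ and set $\delta := y_0 - t_0$. Lemma \ref{2.1.8} yields
\[
F_{(x_0,y_0),\,t_0}(\Gamma)\leq F_{(x_0,\,N+\delta),\,N}(\Gamma)
\]
for every sufficiently large $N$, so it suffices to show that the right-hand side has $\limsup$ at most $2$ as $N\to\infty$. Since $(x_0, y_0, t_0)$ is arbitrary, this yields $\lambda(\Gamma) \leq 2$.

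To analyze this $\limsup$, I would write the right-hand side as an integral over $x \in (-\pi/2, \pi/2)$ using the graph parametrization $x \mapsto (x, -\log\cos x)$ of $\Gamma$. Dropping the factor $\ee^{-(x-x_0)^2/(4N)}$, which is bounded by $1$, and folding by the evenness of $\cos x$ produces an $x_0$-independent upper bound $g(N)$. Two changes of variable are then natural: first, $u = -\log\cos x$, which converts the integral over $(0,\pi/2)$ to an integral over $(0,\infty)$ with Jacobian $1/\sqrt{\ee^{2u}-1}$; then, after rescaling $N \mapsto N^2$, setting $t = u/N$. These put $g(N^2)$ in the form of a Gaussian centered near $t = N$ times the singular weight $1/\sqrt{\ee^{2Nt}-1}$, multiplied by an exponential prefactor tending to $\ee^{-N^2/4}$ that must cancel against the growing exponent $(3N/2)t$ from the other factor.

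The main obstacle is the competition between the integrable singularity of the weight at $t = 0$ and the Gaussian localization near $t = N$; naive bounds would waste one of the two. The clean strategy is to split the integral at $t = N/6$. On $[0, N/6]$, bounding the Gaussian factor by its value at $t = N/6$ causes the prefactor to leave only a $1/N$ multiple of the universal constant $\int_0^\infty 1/\sqrt{\ee^s-1}\,ds = \pi$, so this piece is $O(1/N)$ and vanishes in the limit. On $[N/6, \infty)$, the singular weight satisfies $1/\sqrt{\ee^{2Nt}-1} \leq 1 + O(\ee^{-N^2/6})$; completing the square in the exponent recognizes the integrand as essentially $\ee^{-(t-N-\delta/N)^2/4}/\sqrt{\pi}$ on an interval whose left endpoint tends to $-\infty$, which converges to $\frac{1}{\sqrt{\pi}}\int_{-\infty}^\infty \ee^{-t^2/4}\,dt = 2$. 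Adding the two contributions gives $\limsup_{N \to \infty} g(N^2) \leq 2$, completing the proof.
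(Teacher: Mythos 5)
Your proposal reproduces the paper's proof essentially verbatim: same invocation of Lemma \ref{2.1.8}, same substitutions $u=-\log\cos x$ and $t=u/N$ after rescaling $N\mapsto N^2$, the same split at $t=N/6$, and the same two universal constants $\int_0^\infty(\ee^s-1)^{-1/2}\,ds=\pi$ and $\pi^{-1/2}\int_{-\infty}^\infty \ee^{-t^2/4}\,dt=2$. The only minor imprecision is the written bound $1/\sqrt{\ee^{2Nt}-1}\leq 1+O(\ee^{-N^2/6})$ on $[N/6,\infty)$, which should read $\ee^{-Nt}\bigl(1+O(\ee^{-N^2/6})\bigr)$, but your subsequent description of completing the square shows you intend the $\ee^{-Nt}$ factor to be absorbed there, so this is a notational slip rather than a gap.
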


Using the similar method as above, it is easy to prove that 
\begin{equation}\label{2.2.14}
\lim_{N\rightarrow \infty}F_{(0,N),\,N}(\Gamma)=2.
\end{equation}

Finally, combining Lemma \ref{2.2.10} and (\ref{2.2.14}), we conclude that $\lambda(\Gamma)=2$, which completes the proof of Theorem \ref{0.9}.

\subsection{The entropy of the bowl solitons}
In this subsection, we deal with the case when $n\geq 2$. The idea of the proof of Theorem \ref{0.10} is similar to the one dimensional case. The only difference is, unlike the grim reaper, we do not have explicit expression for the function $f$ in (\ref{0.7}). Therefore, we need to recall some important properties of the function $f$ (see also \cite{CSS} and \cite{WXJ}).

By the property of rotationally symmetry, we have  $f(x)=f(r)$ with $r=|x|$. The equation (\ref{0.7}) gives the following ODE
\begin{equation}\label{0.8}
f_{rr}=(1+f_{r}^2)\Big(1-\frac{(n-1)f_r}{r}\Big)
\end{equation}
with $f(0)=\lim_{r\rightarrow 0} f'(r)=0$ for $f:\mathbf{R}^{+}\rightarrow \mathbf{R}$.

\begin{proposition}\label{2.3.2} 
The function $f$ in the ODE (\ref{0.8}) satisfies the following properties:
\begin{itemize}
\item[(1)] $f'(r)<\frac{r}{n-1}$ and  $f(r) \leq \frac{r^2}{2(n-1)}$. 
\item[(2)] For any $\epsilon>0$, $f'(r)>(\frac{1}{n}-\epsilon)r$, especially, $f'(r)>\frac{r}{2n}$.
\item[(3)] For any $\epsilon>0$, there exists $r_0=r_0(\epsilon)>0$ such that $f'(r)>(1-\epsilon)\frac{r}{n-1}$ for $r\geq r_0$.
\item[(4)] For any $\epsilon>0$, there exists a constant  $M=M(\epsilon)>0$ such that $f(r)>\frac{1-\epsilon}{2(n-1)}r^2-M$.
\end{itemize}
\end{proposition}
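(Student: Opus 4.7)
The plan is to prove the four items in order. Items (1) and (2) will come from first-touch (maximum principle) arguments on the ODE, item (3) is the substantive step and proceeds via an autonomous analysis of $f'(r)/r$, and item (4) is a direct integration of (3). To set up, note that L'H\^opital's rule applied to (\ref{0.8}) together with the initial conditions yields $f''(0)=1/n$, and that the ODE shows $f''(r)\geq 0$ precisely when $f'(r)\leq r/(n-1)$.

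For (1), set $g(r):=r/(n-1)-f'(r)$; then $g(0)=0$ and $g'(0)=1/(n-1)-1/n>0$, so $g>0$ on $(0,\delta)$ for some $\delta>0$. If $g$ first vanishes at some $r_1>0$, then $f'(r_1)=r_1/(n-1)$ makes the second factor of (\ref{0.8}) vanish, so $f''(r_1)=0$ and $g'(r_1)=1/(n-1)>0$, contradicting that $g$ approaches $0$ from above. Integrating $f'(s)<s/(n-1)$ on $[0,r]$ then gives $f(r)\leq r^{2}/(2(n-1))$. The argument for (2) is the analogue applied to $h(r):=f'(r)-(1/n-\epsilon)r$: at a hypothetical first zero $r_1>0$ one has $f'(r_1)=(1/n-\epsilon)r_1$, so $f''(r_1)=(1+f'(r_1)^{2})(1/n+(n-1)\epsilon)>1/n-\epsilon$, whence $h'(r_1)>0$, a contradiction.

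For the main step (3), introduce $u(r):=1/(n-1)-f'(r)/r$, which by (1) is positive; the goal becomes $u(r)\to 0$ as $r\to\infty$. A direct calculation from (\ref{0.8}), together with the identity $f''=(n-1)(1+f'^{2})u$, yields
\[
u'(r)=\frac{P(r)}{r}\bigl(u_{eq}(r)-u(r)\bigr),\quad P(r):=1+(n-1)(1+f'(r)^{2}),\quad u_{eq}(r):=\tfrac{1}{(n-1)P(r)}.
\]
By (2), $f'(r)\geq r/(2n)$, so $P(r)\geq (n-1)r^{2}/(4n^{2})$ and $u_{eq}(r)\to 0$. The hard part is to turn the heuristic that ``$u$ is attracted to the moving equilibrium $u_{eq}$'' into a quantitative decay. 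The plan: given $\delta>0$, choose $r_0$ large enough that $u_{eq}(r)<\delta/2$ for $r\geq r_0$; on any interval $[r_0,R]$ where $u\geq\delta$ the estimate $u'\leq-(P/r)(u/2)\leq -c\delta r$, for a dimensional constant $c>0$, would force $u$ to become negative after a bounded range, contradicting $u>0$. Hence $u(r_1)<\delta$ for some $r_1\geq r_0$; since $u'<0$ whenever $u>u_{eq}$, a first-return argument at level $\delta$ forbids $u$ from ever crossing back up through $\delta$, so $u\to 0$ and (3) follows. Finally, (4) is obtained by integrating (3): with $r_0=r_0(\epsilon)$ as there and using $f\geq 0$ (from $f(0)=f'(0)=0$ together with $f''\geq 0$), integrating $f'(r)>(1-\epsilon)r/(n-1)$ over $[r_0,r]$ produces the stated bound with $M:=\frac{1-\epsilon}{2(n-1)}r_{0}^{2}$.
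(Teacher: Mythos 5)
Your proofs of items (1), (2) and (4) follow essentially the same first-touch and integration arguments as the paper; the genuine divergence is in item (3), where you take a different and correct route. The paper proves (3) in two steps: first, it shows that for any $R$ there is $r_1\geq R$ with $g(r_1)>\frac{r_1}{n-1}(1-\epsilon)$, because otherwise the ODE inequality $g'\geq\epsilon(1+g^2)$ would force $g$ to blow up in finite time, contradicting global existence; second, once the solution enters the region $\bigl\{g>\frac{r}{n-1}(1-\epsilon)\bigr\}$ at a sufficiently large $r_0$, a first-touch computation of $k'(t)=\epsilon\bigl(1+(\tfrac{1-\epsilon}{n-1})^2t^2\bigr)-\frac{1-\epsilon}{n-1}>0$ shows it can never leave. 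Your argument instead sets $u=\frac{1}{n-1}-\frac{f'}{r}$, derives the exact identity $u'=\frac{P}{r}\bigl(u_{\mathrm{eq}}-u\bigr)$ with $P=1+(n-1)(1+f'^2)$ (which I have checked; it is correct), and runs a two-stage dynamical argument: the quantitative drift $u'\leq -c\delta r$ pushes $u$ below any level $\delta$ within a bounded range of $r$ (since $u\leq\frac{1}{n-1}$), and a first-return argument then traps $u$ below $\delta$ thereafter, giving $u\to 0$. Both routes are sound. The paper's version is slightly shorter, trading the change of variables and the explicit moving-equilibrium picture for a blow-up comparison plus a one-shot first-touch check; yours buys a cleaner conceptual explanation of why the large-$r$ growth of $f'^2$ forces convergence, and it isolates the mechanism (attraction to $u_{\mathrm{eq}}\to 0$) in a reusable form. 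If you write this up, be sure to make the two-step structure explicit (reach level $\delta$ once, then show no re-crossing) and to record the uniform bound $0<u\leq\frac{1}{n-1}$ from item (1) so the ``bounded range'' claim is quantitative.
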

\begin{proof}
Set $g(r)=f'(r)$. For part (1), we prove by contradiction. Set $h(r)=g(r)-\frac{r}{n-1}$ and observe that 
\[\lim_{r\to 0^+}h(r)=0\,\,\text{ and }\,\,\lim_{r\to 0^+} h'(r)=\frac{1}{n}-\frac{1}{n-1}<0.\]
Suppose that $d$ is the positive infimum of $r$ such that $h(r)=0$; that is, $h(d)=0$ and $h(r)<0$ for $r\in (0,d)$. Hence, we have \[h'(d)=g'(d)-\frac{1}{n-1}\geq 0.\]
By the equation (\ref{0.8}), we get $g'(d)=0$ since $h(d)=0$. This gives a contradiction and completes the proof of part (1). Similarly, we can prove part (2).

For part (3), we first claim that for any $R>0$, there exists $r_1\geq R$ such that \[g(r_1)>\frac{r_1}{n-1}(1-\epsilon).\]
If this is not true, then $g'(r)\geq (1+(g(r))^2)\epsilon$ for $r>R$. However, this is a contradiction, since our solution exists for all $r>0$. 

Set $k(r)=g(r)-\frac{r}{n-1}(1-\epsilon)$.
Fix $R_0>0$ to be chosen later, but depending only on $\epsilon$. The first claim implies that there exists $r_0\geq R_0$ such that $k(r_0)>0$. We next claim that $k(r)>0$ for all $r\geq r_0$. If this is not the case, then there exists $t>r_0$ such that \[k(t)=0, \,\,\,k(r)>0 \,\text{ for } r\in (r_0,t),\,\, \text{ and }\, k'(t)\leq 0.\]
Hence, we get that 
\[k'(t)=g'(t)-\frac{1-\epsilon}{n-1}=\epsilon\Big(1+\Big(\frac{1-\epsilon}{n-1}\Big)^2t^2\Big)- \frac{1-\epsilon}{n-1},\]
which is a contradiction provided that $R_0$ is chosen sufficiently large. This gives the proof of part (3). The part (4) follows directly from part (3).
\end{proof}

Next we show that the entropy of $\Gamma$ is less than or equal to the entropy of the sphere $\mf{S}^{n-1}$. Just as in the proof of Theorem \ref{0.9}, we consider the upper bound of the $F$-functional $F_{(x_0,y_0+N),\,t_0+N}(\Gamma)$ by taking $N \rightarrow +\infty$.

For any fixed point $(x_0,y_0)\in \mathbf{R}^{n+1}\, (x_0 \in \mathbf{R}^{n}, y_0 \in \mathbf{R})$ and $t_0>0$, by Lemma \ref{2.1.8}, we may choose $N$ large enough so that

\[
F_{(x_0,y_0),\,t_0}(\Gamma)\leq F_{(x_0,N+\delta),\,N}(\Gamma),
\]
where $\delta=y_0-t_0$. Then we have
\[
\begin{split}
F_{(x_0,N+\delta),\,N}(\Gamma)&=\frac{1}{(4\pi N)^{\frac{n}{2}}}\int_{\mathbf{R}^n} {\textrm{e}}^{-\frac{{|x-x_0|}^2+(f(x)-N-\delta)^2}{4N}}(1 + |\nabla f|^2)^{\frac{1}{2}}\, dx \\ &=\frac{1}{(4\pi N)^{\frac{n}{2}}}\int_0^{\infty} \int_{\partial B_r} {\textrm{e}}^{-\frac{{|x-x_0|}^2+(f(x)-N-\delta)^2}{4N}}(1 + |\nabla f|^2)^{\frac{1}{2}}dS_rdr.
\end{split}\]

Now we fix an arbitrarily  small constant $\epsilon>0$.  By Proposition \ref{2.3.2}, there exists a constant $M=M(\epsilon)>0$ such that \[f(r)>\frac{1-\epsilon}{2(n-1)}r^2-M.\] 

By the absorbing inequality, we have $2\langle x, x_0 \rangle \leq \epsilon |x|^2+\epsilon^{-1}|x_0|^2$. Therefore, we obtain that $|x-x_0|^2 \geq (1-\epsilon)|x|^2+(1-\epsilon^{-1})|x_0|^2$. This gives 
\[
\begin{split}
F_{(x_0,N+\delta),\,N}(\Gamma)& \leq \frac{1}{(4\pi N)^{\frac{n}{2}}} \int_0^{\infty} \int_{\partial B_r} (1 + |\nabla f|^2)^{\frac{1}{2}} \exp\Big\{\frac{-1}{4N}\Big((1-\epsilon)|x|^2 \\ & +(1-\epsilon^{-1})|x_0|^2+(f(x)-N-\delta)^2\Big)\Big\} \,dS_r\,dr \\ & =\frac{n\alpha(n)}{(4\pi N)^{\frac{n}{2}}}  {\textrm{e}}^{-\frac{(1-\epsilon^{-1})}{4N}|x_0|^2} \int_0^{\infty} r^{n-1}  \exp\Big\{-\frac{1}{4N}\Big((1-\epsilon)r^2 \\ & +(f(r)-N-\delta)^2\Big)\Big\}(1+(f'(r))^2)^{\frac{1}{2}}\,dr.
\end{split}
\]
Since  ${\textrm{e}}^{-\frac{(1-\epsilon^{-1})}{4N}|x_0|^2}$ goes to $1$ when $N \rightarrow +\infty$, we only need to consider the integral:
\[
g(N)=\frac{n\alpha(n)}{(4\pi N)^{\frac{n}{2}}} \int_0^{\infty} r^{n-1} {\textrm{e}}^{-\frac{(1-\epsilon)r^2+(f(r)-N-\delta)^2}{4N}}\sqrt{1+(f'(r))^2}\,dr.
\]
Note that $f(r)$ is monotone and strictly increasing, we can make a change of variable and set \[u=\frac{f(r)-N-\delta}{\sqrt{N}}.\] Assume $h$ is the inverse function of $f(r)$, then we have $h(\sqrt{N}u+N+\delta)=r$.

By Proposition \ref{2.3.2}, we get that 
\begin{equation}\label{2.3.9}
r^2\geq 2(n-1)f(r)=2(n-1)(\sqrt{N}u+N+\delta),
\end{equation}
\begin{equation}\label{2.3.10}
\sqrt{N}u+N+\delta=f(r)>\frac{1-\epsilon}{2(n-1)}r^2-M.
\end{equation}
Combining this with $f'(r)>\frac{r}{2n}$ implies that
\[
\begin{split}
g(N)&\leq \frac{n\alpha(n)}{(4\pi N)^{\frac{n}{2}}} \int_0^{\infty} {\textrm{e}}^{-\frac{(1-\epsilon)r^2+(f(r)-N-\delta)^2}{4N}}(r^{n-1}+r^{n-1}f'(r))\,dr \\ &=\frac{n\alpha(n)}{(4\pi N)^{\frac{n}{2}}} \int_{\frac{-N-\delta}{\sqrt{N}}}^\infty {\textrm{e}}^{-\frac{u^2}{4}} \,{\textrm{e}}^{-\frac{1-\epsilon}{4N}h^2}(h^{n-1}+h^{n-1}h')\sqrt{N}\,du \\ &\leq \frac{n\alpha(n)}{(4\pi N)^{\frac{n}{2}}} \int_{\frac{-N-\delta}{\sqrt{N}}}^\infty  {\textrm{e}}^{-\frac{u^2}{4}-\frac{(1-\epsilon)(n-1)}{2N}(\sqrt{N}u+N+\delta)}\big(Q^{\frac{n-1}{2}} +2n Q^{\frac{n-2}{2}}\Big)N^{\frac{1}{2}}du,
\end{split}
\]
where $h=h(\sqrt{N}u+N+\delta)$ and \[Q=\frac{2(n-1)(M+\sqrt{N}u+N+\delta)}{1-\epsilon}.\]

Next, we split the above integral into two parts, $A$ and $B$.

For part $A$, we have 
\[
A=\frac{n\alpha(n)}{(4\pi N)^{\frac{n}{2}}} \int_{\frac{-N-\delta}{\sqrt{N}}}^\infty {\textrm{e}}^{-\frac{u^2}{4}} \,{\textrm{e}}^{-\frac{(1-\epsilon)(n-1)}{2N}(\sqrt{N}u+N+\delta)}Q^{\frac{n-1}{2}}\sqrt{N}\,du.
\]
It is easy to see that
\[
\lim_{N\rightarrow +\infty} A=n\alpha (n)\Big(\frac{n-1}{2\pi (1-\epsilon)\textrm{e}^{1-\epsilon}}\Big)^{\frac{n-1}{2}}.
\]

For part $B$, we get
\[
B=\frac{n\alpha(n)}{(4\pi N)^{\frac{n}{2}}} \int_{\frac{-N-\delta}{\sqrt{N}}}^\infty {\textrm{e}}^{-\frac{u^2}{4}} \,{\textrm{e}}^{-\frac{(1-\epsilon)(n-1)}{2N}(\sqrt{N}u+N+\delta)}2nQ^{\frac{n-2}{2}}\sqrt{N}\,du,
\]
and when $N \rightarrow+\infty$, $B$ converges to 0.

Combining all the results above, we obtain that for any $\epsilon>0$,
\[
F_{(x_0,y_0),\,t_0}(\Gamma)\leq n\alpha (n)\Big(\frac{n-1}{2\pi (1-\epsilon)\textrm{e}^{1-\epsilon}}\Big)^{\frac{n-1}{2}}.
\]

Since $\epsilon$ is an arbitrarily small number, we get an upper bound of the entropy.
\begin{lemma}\label{2.3.15}
For any point $(x_0,y_0)\in \mathbf{R}^{n+1}\, (x_0 \in \mathbf{R}^{n}, y_0 \in \mathbf{R})$ and $t_0>0$, we have 
\[
F_{(x_0,y_0),\,t_0}(\Gamma)\leq n\alpha (n)\Big(\frac{n-1}{2\pi\textrm{e}}\Big)^{\frac{n-1}{2}},\,\,
\text{i.e.},\,\,\lambda(\Gamma) \leq n\alpha(n)\Big(\frac{n-1}{2\pi\textrm{e}}\Big)^{\frac{n-1}{2}}.\]
\end{lemma}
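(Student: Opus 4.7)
The plan is to use Lemma \ref{2.1.8} to reduce the arbitrary three-parameter estimate to a one-parameter asymptotic estimate as $N\to\infty$. Set $\delta=y_0-t_0$; then for any $N$ large we have $F_{(x_0,y_0),t_0}(\Gamma)\leq F_{(x_0,N+\delta),N}(\Gamma)$, so it suffices to bound the latter in the limit. Writing $\Gamma$ as the graph of $f$ gives
\[
F_{(x_0,N+\delta),N}(\Gamma)=\frac{1}{(4\pi N)^{n/2}}\int_{\mathbf{R}^n}\ee^{-\frac{|x-x_0|^2+(f(x)-N-\delta)^2}{4N}}\sqrt{1+|\nabla f|^2}\,dx.
\]
The basepoint $x_0\in\mathbf{R}^n$ is handled by the absorbing inequality $|x-x_0|^2\geq(1-\epsilon)|x|^2+(1-\epsilon^{-1})|x_0|^2$, after which the $x_0$-dependent factor $\exp\{(1-\epsilon^{-1})|x_0|^2/(4N)\}\to 1$ as $N\to\infty$ and rotational symmetry reduces the problem to a one-dimensional integral in $r=|x|$.

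The main step is an asymptotic analysis via the change of variable $u=(f(r)-N-\delta)/\sqrt{N}$, so that the $(f-N-\delta)^2/(4N)$ piece turns into the standard Gaussian $\ee^{-u^2/4}$ and, with $h=f^{-1}$, the Jacobian contributes $\sqrt{N}/f'(r)$. Using $\sqrt{1+(f')^2}\leq 1+f'$ and $r^{n-1}\leq Q^{(n-1)/2}$ with $Q=2(n-1)(M+\sqrt{N}u+N+\delta)/(1-\epsilon)$ (this is exactly where Proposition \ref{2.3.2}(4) enters to provide the sharp constant), one splits the integrand into a dominant piece $A$ containing $r^{n-1}f'(r)$ and a subleading piece $B$ containing the bare $r^{n-1}$.

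For $A$ one pulls out the $N$-dependent factors and observes that
\[
\frac{1}{(4\pi N)^{n/2}}\cdot Q^{(n-1)/2}\cdot\sqrt{N}\cdot\ee^{-\frac{(1-\epsilon)(n-1)}{2N}(\sqrt{N}u+N+\delta)}
\]
has a clean limit as $N\to\infty$: the factor $\ee^{-(1-\epsilon)(n-1)/2}$ appears from the $N$-term, the shift $-(1-\epsilon)(n-1)u/(2\sqrt{N})\cdot\sqrt{N}$ in the exponent is absorbed into the Gaussian, and the algebraic prefactor collapses so that $A\to n\alpha(n)\bigl((n-1)/(2\pi(1-\epsilon)\ee^{1-\epsilon})\bigr)^{(n-1)/2}$, by completing the square and using $\int_{\mathbf{R}}\ee^{-u^2/4}du/\sqrt{\pi}=2$. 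The term $B$ has one fewer power of $Q\sim N$, hence an extra $1/\sqrt{N}$ relative to $A$, and therefore vanishes in the limit.

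The combination of these limits yields $F_{(x_0,y_0),t_0}(\Gamma)\leq n\alpha(n)\bigl((n-1)/(2\pi(1-\epsilon)\ee^{1-\epsilon})\bigr)^{(n-1)/2}$ for every $\epsilon>0$, and letting $\epsilon\to 0$ gives the desired bound. The principal difficulty is purely analytic: controlling the integrand uniformly in $N$ so that the $\epsilon$-perturbation disappears in the limit. This relies crucially on the \emph{sharp} asymptotic $f(r)\sim r^2/(2(n-1))$ from Proposition \ref{2.3.2}, with matching upper bound from part (1) and matching lower bound from part (4); without both one-sided bounds one cannot extract the correct constant $(n-1)/(2\pi\ee)$ which exactly matches the entropy of $\mathbf{S}^{n-1}$.
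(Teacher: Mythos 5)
Your proposal is essentially identical to the paper's proof: Lemma \ref{2.1.8} to push the basepoint to infinity, the absorbing inequality to remove the $x_0$-dependence, the change of variable $u=(f(r)-N-\delta)/\sqrt{N}$, the bound $\sqrt{1+(f')^2}\leq 1+f'$ with $r^{n-1}\leq Q^{(n-1)/2}$ from Proposition \ref{2.3.2}(4), and the split into the dominant piece $A$ (from $r^{n-1}f'\,dr$, yielding the sharp constant) and the vanishing piece $B$ (from $r^{n-1}\,dr$, controlled via Proposition \ref{2.3.2}(2)). One small caveat: for this upper-bound lemma only the lower bound on $f$ from part (4) (together with part (2)) is actually needed; the matching upper bound $f(r)\leq r^2/(2(n-1))$ from part (1) is what makes the bound attainable, and it is used in Lemma \ref{2.3.17} rather than here.
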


The next lemma shows that this upper bound can be achieved.
\begin{lemma}\label{2.3.17}
\begin{equation}
\lim_{N\rightarrow +\infty}F_{(0,N),\,N}(\Gamma)=n\alpha(n)\Big(\frac{n-1}{2\pi\textrm{e}}\Big)^{\frac{n-1}{2}}.
\end{equation}
\end{lemma}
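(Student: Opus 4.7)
The upper bound is already in hand: applying Lemma~\ref{2.3.15} with $(x_0,y_0,t_0)=(0,N,N)$ gives $F_{(0,N),N}(\Gamma)\leq n\alpha(n)\bigl(\tfrac{n-1}{2\pi\textrm{e}}\bigr)^{(n-1)/2}$ for every $N>0$, so $\limsup_{N\to\infty} F_{(0,N),N}(\Gamma)$ is bounded above by the target value. The task is therefore to establish the matching $\liminf$ inequality. Using rotational symmetry,
\[
F_{(0,N),N}(\Gamma) \;=\; \frac{n\alpha(n)}{(4\pi N)^{n/2}}\int_0^\infty r^{n-1}\, \textrm{e}^{-\frac{r^2+(f(r)-N)^2}{4N}}\sqrt{1+f'(r)^2}\, dr.
\]

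I would next perform the same substitution used in the upper-bound computation, $u=(f(r)-N)/\sqrt{N}$, which is legitimate because $f\colon[0,\infty)\to[0,\infty)$ is a diffeomorphism by Proposition~\ref{2.3.2}(2) and~(4). With $r(u)=h(\sqrt{N}u+N)$ and $h=f^{-1}$, the Jacobian together with the trivial inequality $\sqrt{1+f'(r)^2}/f'(r)\geq 1$ yields the clean one-sided bound
\[
F_{(0,N),N}(\Gamma) \;\geq\; \frac{n\alpha(n)\sqrt{N}}{(4\pi N)^{n/2}}\int_{-\sqrt{N}}^\infty r(u)^{n-1}\,\textrm{e}^{-r(u)^2/(4N)}\,\textrm{e}^{-u^2/4}\,du.
\]
For fixed small $\epsilon>0$, Proposition~\ref{2.3.2}(1) gives $r(u)^2\geq 2(n-1)(\sqrt{N}u+N)$, while Proposition~\ref{2.3.2}(4) gives $r(u)^2\leq \tfrac{2(n-1)}{1-\epsilon}(\sqrt{N}u+N+M)$. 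Together these produce the pointwise estimate
\[
r(u)^{n-1}\textrm{e}^{-r(u)^2/(4N)} \;\geq\; (2(n-1))^{(n-1)/2}(\sqrt{N}u+N)^{(n-1)/2}\exp\!\Bigl(\!-\tfrac{(n-1)(\sqrt{N}u+N+M)}{2N(1-\epsilon)}\Bigr).
\]

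To conclude, for any $T>0$ I would restrict the $u$-integral to $[-T,T]$ and take $N\to\infty$: on this window $(\sqrt{N}u+N)^{(n-1)/2}/N^{(n-1)/2}\to 1$ and the exponent tends to $-(n-1)/(2(1-\epsilon))$, both uniformly, while the prefactor $\frac{n\alpha(n)\sqrt{N}\,(2(n-1))^{(n-1)/2} N^{(n-1)/2}}{(4\pi N)^{n/2}}$ collapses algebraically to $n\alpha(n)\bigl(\tfrac{n-1}{2\pi}\bigr)^{(n-1)/2}/(2\sqrt{\pi})$ (this is precisely the same algebraic simplification appearing in the upper-bound proof). Hence
\[
\liminf_{N\to\infty}F_{(0,N),N}(\Gamma) \;\geq\; n\alpha(n)\Bigl(\tfrac{n-1}{2\pi}\Bigr)^{(n-1)/2}\textrm{e}^{-\frac{n-1}{2(1-\epsilon)}}\cdot \frac{1}{2\sqrt{\pi}}\int_{-T}^T\textrm{e}^{-u^2/4}\,du,
\]
and sending first $T\to\infty$ (so the Gaussian integral tends to $2\sqrt{\pi}$) and then $\epsilon\to 0^+$ produces the desired lower bound $n\alpha(n)\bigl(\tfrac{n-1}{2\pi\textrm{e}}\bigr)^{(n-1)/2}$. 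The only real point requiring care — and in my view the only obstacle — is verifying the uniform convergence of the pointwise estimates on the truncation window $[-T,T]$, which is immediate since $\sqrt{N}u+N\in[N/2,3N/2]$ whenever $|u|\leq T$ and $N$ is large; there is no tail difficulty because the factor $\textrm{e}^{-u^2/4}$ supplies integrability on its own.
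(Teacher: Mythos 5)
Your proposal is correct and follows essentially the same route as the paper: change of variable $u=(f(r)-N)/\sqrt{N}$, the bounds $2(n-1)(\sqrt{N}u+N)\leq r(u)^2\leq \tfrac{2(n-1)}{1-\epsilon}(\sqrt{N}u+N+M)$ from Proposition~\ref{2.3.2}, and then let $N\to\infty$ and $\epsilon\to 0$. Your truncation to $[-T,T]$ to make the passage to the limit fully rigorous, and your clean use of $\sqrt{1+f'(r)^2}/f'(r)\geq 1$ (equivalently $\sqrt{1+(h')^2}\geq 1$) where the paper writes an abusive ``equality'' with $h^{n-1}+h^{n-1}h'$ before discarding the $h'$-term, tighten up small gaps in the original exposition but do not change the argument.
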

\begin{proof}
By definition, we get
\begin{equation}\label{2.3.20}
\begin{split}
F_{(0,N),\,N}(\Gamma)&=(4\pi N)^{-\frac{n}{2}}\int_{\mathbf{R}^n} {\textrm{e}}^{-\frac{{|x|}^2+(f(x)-N)^2}{4N}}\sqrt{1+{|\nabla f|}^2}\, dx \\ &=\frac{n\alpha(n)}{(4\pi N)^{\frac{n}{2}}}  \int_0^{\infty} r^{n-1} {\textrm{e}}^{-\frac{r^2+(f(r)-N)^2}{4N}}\sqrt{1+(f'(r))^2}\,dr.
\end{split}
\end{equation}
Following the same method and using the same notations as in the proof of Lemma \ref{2.3.15}, by (\ref{2.3.9}) and (\ref{2.3.10}), we have 
\[
\begin{split}
F_{(0,N),\,N}(\Gamma)&=\frac{n\alpha(n)}{(4\pi N)^{\frac{n}{2}}} \int_{-\sqrt{N}}^\infty {\textrm{e}}^{-\frac{u^2}{4}} \,{\textrm{e}}^{-\frac{h^2}{4N}}\big(h^{n-1}+h^{n-1}h'\big)\sqrt{N}\,du \\& \geq \frac{n\alpha(n)}{(4\pi N)^{\frac{n}{2}}} \int_{-\sqrt{N}}^\infty \big(2(n-1)(\sqrt{N}u+N)\big)^{\frac{n-1}{2}}\sqrt{N}\exp\Big\{-\frac{u^2}{4} \\ & -\frac{2(n-1)}{4(1-\epsilon)N}\big(\sqrt{N}u+N+M\big)\Big\}\,du.
\end{split}
\]
Taking $N\rightarrow +\infty$ gives
\[
\lim_{N\rightarrow +\infty}F_{(0,N),\,N}(\Gamma)\geq n\alpha (n)\Big(\frac{n-1}{2\pi}\textrm{e}^{-\frac{1}{1-\epsilon}}\Big)^{\frac{n-1}{2}}.
\]
Since $\epsilon$ is arbitrary, by Lemma \ref{2.3.15}, the claim now easily follows.
\end{proof}

Finally, Lemma \ref{2.3.17} and Lemma \ref{2.3.15} complete the proof of Theorem \ref{0.10}.

\subsection{A curvature estimate for translators in $\mathbf{R}^3$  with small entropy}

In order to get a curvature estimate for translators in $\mathbf{R}^3$ with small entropy, we will use Allard's compactness theorem \cite{Al} for integral rectifiable varifolds with locally bounded first variation. Moreover, we will restrict to ``boundary-less varifolds" \cite{W5} where the rectifiable varifold is mod two equivalent to an integral current without boundary. Following the notation in \cite[Definition 4.1]{W5}, we say that an integral rectifiable varifold $V$ is cyclic mod 2 (boundary-less) provided $\partial [V]=0$, where $[V]$ is the rectifiable mod 2 flat chain associated to $V$. 
Note that a varifold  which consists of unions of an odd number of multiplicity-one half-planes meeting along a common line is not cyclic mod 2. Under the assumptions of small entropy and the ``boundary-less'' condition, the following regularity result holds which is implicit in section 5 in \cite{CM3} (see also section 4.1 in \cite{BW}). For convenience of the reader, we also include a proof here.

\begin{lemma}\label{2.4.1}
Let $\Sigma^2 \subset \mathbf{R}^3$ be a boundary-less (cyclic mod 2) integral rectifiable varifold with $\lambda(\Sigma)
<2$. If $\Sigma$ is a weak solution of either the self-shrinker equation or the translator equation, then $\Sigma$ is smooth.
\end{lemma}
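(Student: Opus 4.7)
The plan is to combine a monotonicity argument that controls Gaussian densities with a density gap statement for cyclic mod $2$ varifolds, and finally apply Allard's regularity theorem together with elliptic bootstrapping.

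First, I would use the entropy hypothesis to control the Gaussian density $\Theta(\Sigma,p)$ at every $p\in\text{spt}\,\Sigma$. In the self-shrinker case, $\Theta(\Sigma,p)=\lim_{t_0\to 0^+}F_{p,t_0}(\Sigma)\leq\lambda(\Sigma)<2$ is immediate from the definition of entropy. In the translator case, the family $\{\Sigma+t\mathbf{E}_{n+1}\}_{t\in\mathbf{R}}$ is a Brakke flow, and Huisken's monotonicity formula (Theorem \ref{2.1.2}, in its weak-varifold form) bounds the Gaussian density at every space-time point by $\lambda(\Sigma)$, so again $\Theta(\Sigma,p)<2$.

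Next, I would perform a blow-up at an arbitrary $p\in\Sigma$: rescaling by $\rho_k\to 0$, the lower-order terms in each soliton equation, namely $\tfrac{1}{2}\lb{x,\mathbf{n}}$ and $\lb{\mathbf{E}_{n+1},\mathbf{n}}$, scale away, so any subsequential tangent varifold $C$ is a stationary integral $2$-varifold in $\mathbf{R}^3$. The class of cyclic mod $2$ integral varifolds is closed under rescaling and weak convergence (since the boundary operator on mod $2$ flat chains is continuous under such limits), so $C$ is also cyclic mod $2$. By Allard's monotonicity, $C$ is a cone, and $\Theta(C,0)=\Theta(\Sigma,p)<2$.

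The main step is then the classification: a stationary integral $2$-cone $C\subset\mathbf{R}^3$ that is cyclic mod $2$ and satisfies $\Theta(C,0)<2$ must be a multiplicity-one hyperplane. A stationary $2$-cone in $\mathbf{R}^3$ with density $<2$ is, by standard structure theorems, a union of finitely many half-planes meeting along a common line through the origin with multiplicities summing (via the force-balance condition) to twice the density. The cyclic mod $2$ condition rules out configurations consisting of an odd number of multiplicity-one sheets meeting along a line (which would fail $\partial[C]=0$), while the strict inequality $\Theta<2$ rules out two transverse multiplicity-one planes and any higher-density tangent. Consequently the only option is a single multiplicity-one plane, and $\Theta(\Sigma,p)=1$ for every $p$. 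Allard's regularity theorem then yields that $\Sigma$ is locally a $C^{1,\alpha}$ graph, and standard elliptic bootstrapping, using either the self-shrinker or translator equation as a quasilinear elliptic PDE for the graph function, upgrades this to smoothness.

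The delicate point is the cone classification: one must be careful that the cyclic mod $2$ condition really does exclude the borderline density-$3/2$ configuration of three half-planes meeting along a line, as well as more exotic stationary $2$-cones, and that density strictly less than $2$ (rather than $\leq 2$) excludes two transverse multiplicity-one planes meeting along a line. The arguments in section 5 of \cite{CM3} and section 4.1 of \cite{BW} carry this out in the shrinker setting, and transfer to the translator setting essentially verbatim once Huisken's monotonicity is used to bound Gaussian density by entropy along the translating flow.
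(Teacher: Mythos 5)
Your overall architecture matches the paper's: analyze tangent cones at every point, use the cyclic-mod-$2$ condition to kill odd configurations and the small-entropy bound to kill density $\geq 2$, conclude each tangent cone is a multiplicity-one plane, then apply Allard's regularity theorem and elliptic bootstrapping. The first two steps (bounding Gaussian density by entropy via Huisken's monotonicity for the translating Brakke flow, passing to stationary integral tangent cones that remain cyclic mod $2$) are correct and match the paper's use of \cite{Al}, \cite{Si1}, and \cite[Theorem 1.1]{W5}.

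The gap is in the cone classification. You assert that ``a stationary $2$-cone in $\mathbf{R}^3$ with density $<2$ is, by standard structure theorems, a union of finitely many half-planes meeting along a common line,'' and then rule out configurations. This is not a standard citable fact: the link of such a cone is an integral stationary $1$-varifold on $\mathbf{S}^2$, i.e., a geodesic net by Allard--Almgren \cite{Al2}, and geodesic nets need not consist of great semicircles through a fixed pair of antipodal points. To reach the structure you assert, one has to first show that the singular set of the tangent cone $V$ is contained in $\{\mathbf{0}\}$, and that is exactly the step the paper does carefully via Federer dimension reduction \cite[Lemma 5.8]{CM3}: at a putative $y\in\mathrm{Sing}(V)\setminus\{\mathbf{0}\}$, a second blow-up splits off the line $\mathbf{R}_y$ and produces a one-dimensional integral stationary cone $V'\subset\mathbf{R}^2$, where lower semicontinuity gives $\lambda(V')<2$, hence at most three rays; the cyclic mod $2$ condition then rules out three rays, forcing $V'$ to be a multiplicity-one line, which makes $y$ a regular point of $V$ by Allard, a contradiction. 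Only after this does one know the link of $V$ is a smooth closed geodesic, hence a great circle, so $V$ is a multiplicity-one plane. In short: your cyclic/density dichotomy is applied at the level of the full $2$-cone, but the correct place to apply it is at the level of the $1$-dimensional cones arising from dimension reduction at points of $\mathrm{Sing}(V)\setminus\{\mathbf{0}\}$. Adding that intermediate reduction closes the gap and recovers the paper's proof.
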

\begin{proof}
We will analyze tangent cones to prove this lemma. By a standard result from \cite{Al} (see also section 42 in \cite{Si1}), any integral $n$-rectifiable varifold has stationary integral rectifiable tangent cones as long as the generalized mean curvature $H$ is locally in $L_p$ for some $p>n$. In our case, both equations guarantee that $H$ is locally bounded. This gives the existence of stationary tangent cones at every point. Moreover, it follows from \cite[Theorem 1.1]{W5} that those tangent cones must also be boundary-less.

Next, we will show that any such tangent cone $V$ is a multiplicity-one hyperplane. If $y\in \text{Sing}(V)\setminus \{\mathbf{0}\}$ (Sing($V$) denotes the singular set of $V$), then a dimension reduction argument \cite[Lemma 5.8]{CM3} gives that every tangent cone to $V$ at $y$ is of the form $V'\times \mathbf{R}_y$. Here $\mathbf{R}_y$ is the line in the direction $y$ and $V'$ is a one-dimensional integral stationary cone in $\mathbf{R}^2$. By the lower semi-continuity of entropy, we have $\lambda(V')\leq \lambda(V)<2$. Since any one-dimensional cone is the union of rays, the small entropy condition implies that $V'$ consists of at most three rays. Moreover, ``boundary-less'' property rules out three rays. Therefore, the only such configuration that is stationary is when there are two rays to form a multiplicity-one line and this implies that $\text{Sing}(V)\subset \{\mathbf{0}\}$. By \cite{Al2}, we conclude that the intersection of $V$ with unit sphere $\mathbf{S}^2$ is a smooth closed geodesic, i.e., a great circle, and this gives that $V$ is a multiplicity-one hyperplane.

Now combining the fact that $H$ is locally bounded, Allard's regularity theorem \cite{Al} (see also Theorem 24.2 in \cite{Si1}) gives that $\Sigma$ is a $C^{1,\beta}$ manifold for some $\beta>0$. Then elliptic theory for the self-shrinker or translator equation gives estimates on higher derivatives, and this implies that $\Sigma$ is smooth.
\end{proof}

\begin{theorem}
Let $\Sigma^2 \subset \mathbf{R}^3$ be a smooth complete translator with $\lambda(\Sigma) \leq \alpha <2$. Then there exists a constant $C=C(\alpha)>0$ such that $|A|^2\leq C$.
\end{theorem}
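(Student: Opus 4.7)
The plan is a blow-up argument by contradiction, combining Lemma \ref{2.4.1} with Allard's compactness theorem. Suppose no such $C(\alpha)$ exists; then there would be a sequence of smooth complete translators $\Sigma_i \subset \mathbf{R}^3$ with $\lambda(\Sigma_i) \leq \alpha < 2$ and $\sup_{\Sigma_i} |A|^2 \to \infty$. Using a standard Hamilton-type point-picking argument, I would select $p_i \in \Sigma_i$ and scales $r_i := |A|(p_i) \to \infty$ such that $|A| \leq 2 r_i$ on $\Sigma_i \cap B_{R_i/r_i}(p_i)$ for some $R_i \to \infty$. The rescaled surfaces $\wti\Sigma_i := r_i(\Sigma_i - p_i)$ then satisfy the translator equation with translation vector $\mathbf{E}_3/r_i$; in particular $|\wti H_i| \leq 1/r_i \to 0$, while $|\wti A_i| \leq 2$ on $B_{R_i}(0)$ and $|\wti A_i|(0) = 1$. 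Since entropy is invariant under dilations and translations, $\lambda(\wti\Sigma_i) \leq \alpha < 2$, which by direct comparison with $F_{x,r^2}$ yields uniform bounds on the area ratios of the $\wti\Sigma_i$.

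Next I would invoke Allard's compactness theorem to pass to a subsequential varifold limit $V$ in $\mathbf{R}^3$: integer rectifiable, with $H_V = 0$ weakly (so $V$ is stationary) and $\lambda(V) \leq \alpha < 2$ by lower semicontinuity. Since each $\wti\Sigma_i$ is smooth and hence cyclic mod 2, \cite[Theorem 1.1]{W5} ensures $V$ is cyclic mod 2 as well. The tangent-cone and dimension-reduction analysis in the proof of Lemma \ref{2.4.1} uses only local boundedness of $H$, the boundary-less property, and the small entropy, so it applies verbatim to the minimal varifold $V$ and yields that $V$ is smooth. Together with the uniform bound $|\wti A_i| \leq 2$ on balls of increasing radius, standard elliptic theory then upgrades the $C^{1,\beta}$ convergence of Allard to smooth convergence on compact sets, so $|A_V|(0) = \lim_i |\wti A_i|(0) = 1$; in particular $V$ is not flat.

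The main obstacle is to derive the desired contradiction by showing that any cyclic mod 2 smooth complete minimal surface $V$ in $\mathbf{R}^3$ with $\lambda(V) < 2$ must be a hyperplane. For this I would analyze the tangent cone of $V$ at infinity: it is a cyclic mod 2 stationary integral cone in $\mathbf{R}^3$ with entropy at most $\lambda(V) < 2$, and the same dimension-reduction argument forces it to be a multiplicity-one hyperplane, so the density at infinity of $V$ equals $1$. The monotonicity formula for minimal surfaces then traps the density ratio $\text{Vol}(V \cap B_r(p))/(\pi r^2)$ between $1$ (as $r \to 0^+$, since $V$ is smoothly embedded at $p$) and $1$ (as $r \to \infty$), and the rigidity of monotonicity forces $V$ to be a cone from $p$. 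Smoothness at $p$ then identifies this cone with the tangent plane at $p$, so $V$ is a hyperplane, contradicting $|A_V|(0) = 1$ and completing the proof.
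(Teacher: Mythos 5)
Your proposal is correct, but it takes a genuinely different route from the paper, and the difference is worth noting because the paper's approach is substantially shorter. You perform a standard blow-up: point-pick, rescale by $r_i = |A|(p_i)$, pass to a minimal-surface limit $V$ with $|A_V|(0)=1$, and then must prove a Bernstein-type statement (any cyclic mod 2 complete minimal surface in $\mathbf{R}^3$ with entropy $<2$ is a plane) via the tangent cone at infinity and rigidity of the monotonicity formula. That last step is correct -- the entropy bound dominates the density at infinity, the dimension-reduction plus boundary-less forces the cone at infinity to be a multiplicity-one plane, and constancy of the density ratio at a smooth point gives a plane -- but it is extra work. The paper observes that rescaling is unnecessary: every translator automatically has $|H| = |\langle \mathbf{E}_3, \mathbf{n}\rangle| \le 1$, so one can simply translate each $\Sigma_i$ to put $x_i$ at the origin without changing the equation. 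Allard's compactness then produces a varifold limit $\widetilde\Sigma$ that is still a \emph{weak translator}, the boundary-less property passes to the limit by \cite[Theorem 4.2]{W5}, Lemma~\ref{2.4.1} (which is already formulated for the translator equation) gives smoothness of $\widetilde\Sigma$, and Allard regularity upgrades the convergence to smooth near the origin, immediately contradicting $|A|(x_i)>i$. In short, working directly with the translator limit lets the paper reuse Lemma~\ref{2.4.1} verbatim and dispense entirely with the Bernstein argument you had to supply; your version is a valid alternative but proves more than is needed.
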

\begin{proof}
We will argue by contradiction. Suppose therefore that there is a sequence $\Sigma_i \subset \mathbf{R}^3$ of smooth translators with $\lambda(\Sigma_i)\leq \alpha$ and points $x_i \in \Sigma_i$ with
\begin{equation}\label{2.4.4}
|A|(x_i)>i.
\end{equation}

Then we translate $\Sigma_i$ to $\tilde{\Sigma_i}$ such that $x_i$ is the origin. Note that at any point, we have density bounds for $\tilde{\Sigma_i}$ coming from the entropy bound. Since each $\tilde{\Sigma_i}$ has bounded mean curvature, Allard's compactness theorem \cite{Al} gives a subsequence of the $\tilde{\Sigma_i}$'s that converges to an integral rectifiable varifold $\tilde{\Sigma}$ which weakly satisfies the translator equation and has $\lambda(\tilde{\Sigma})\leq \alpha <2$. The small entropy condition implies that the convergence is multiplicity one. 
As $\tilde{\Sigma_i}$ is smooth complete embedded, the Brakke flow associated to $\tilde{\Sigma_i}$ is cyclic mod 2, i.e., boundary-less, in the sense of \cite[Definition 4.1]{W5}. Therefore, by \cite[Theorem 4.2]{W5}, the limit $\tilde{\Sigma}$ is also boundary-less. Thus, Lemma \ref{2.4.1} gives that $\tilde{\Sigma}$ is smooth. Finally, Allard's regularity theorem \cite{Al} implies that the convergence is also smooth, which contradicts (\ref{2.4.4}).
\end{proof}
\begin{remark}
A similar result holds for $L$-stable translators in $\mathbf{R}^3$. More precisely, there exists a constant $C>0$ such that the curvature of any $L$-stable translator in $\mathbf{R}^3$ is bounded by $C$, i.e., $|A|\leq C$ (see \cite{GZ4}). The translating graph case of this result was proved by Shahriyari \cite[Theorem 3.2]{LS2}.
\end{remark}

\section{$L$-stability and Rigidity results}
In this section, we first consider the spectrum of the operator $L$ for translators and compute the bottom of the spectrum for grim reaper hyperplanes. Then, by using a cut off argument and a uniqueness lemma, we give a rigidity result for translators in terms of the weighed $L^2$ norm of the second fundamental form.
\subsection{The spectrum of $L$ for translators}Let $\Sigma^n \subset \mathbf{R}^{n+1}$ be a translator.  Note that all translators are noncompact, so there may not be a lowest eigenvalue for the operator $L$. However, we can still define the bottom of the spectrum $\mu_1$ by 
\[
\mu_1=\inf_f \frac{\int_\Sigma \big(|\nabla f|^2-|A|^2f^2\big)\ee^{x_{n+1}}}{\int_\Sigma f^2 \ee^{x_{n+1}}}=\inf_f \frac{-\int_\Sigma f(Lf)\ee^{x_{n+1}}}{\int_\Sigma f^2 \ee^{x_{n+1}}},
\]
where the infimum is taken over all smooth functions with compact support. Note that standard density arguments imply that we get the same $\mu_1$ if we take the infimum over 
Lipschitz functions with compact support. It is possible that $\mu_1=-\infty$ since all translators are noncompact.

By the definition of $\mu_1$ and the minimal surface theory (see \cite{FS1}; cf.\cite[Lemma 9.25]{CM1}), we have the following characterization of $L$-stability for translators.
\begin{lemma}\label{3.1.5}
If $\Sigma^n \subset \mathbf{R}^{n+1}$ is a translator, then the following are equivalent:
\begin{itemize}
\item [(1)] $\Sigma$ is $L$-stable;
\item [(2)] $\mu_1(\Sigma) \geq 0$;
\item [(3)] there exists a positive function $u$ satisfying $Lu=0$.
\end{itemize}
\end{lemma}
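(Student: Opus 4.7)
The proof establishes the equivalences in the order $(1) \Leftrightarrow (2)$, then $(3) \Rightarrow (2)$, then $(2) \Rightarrow (3)$. The first equivalence is immediate from definitions: $L$-stability asserts $\int_\Sigma(-fLf)\ee^{x_{n+1}}\,d\mu \geq 0$ for every compactly supported $f$, which after dividing by $\int_\Sigma f^2 \ee^{x_{n+1}}\,d\mu$ and taking the infimum over such $f$ is exactly $\mu_1 \geq 0$.

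For $(3) \Rightarrow (2)$, I plan to use the standard logarithmic trick. Given $u > 0$ with $Lu = 0$, the definition of $L$ gives $\mathcal{L}u = -|A|^2 u$. For any compactly supported $f$, the function $f^2/u$ is compactly supported and $C^2$, so Lemma \ref{1.1.30} applied with $f^2/u$ as its first argument and $u$ as its second yields
$$\int_\Sigma f^2 |A|^2 \ee^{x_{n+1}}\,d\mu \;=\; -\int_\Sigma \frac{f^2}{u}(\mathcal{L}u)\ee^{x_{n+1}}\,d\mu \;=\; \int_\Sigma \Big\langle \nabla u, \nabla\Big(\frac{f^2}{u}\Big)\Big\rangle \ee^{x_{n+1}}\,d\mu.$$
Expanding $\nabla(f^2/u) = (2f/u)\nabla f - (f^2/u^2)\nabla u$ and applying the elementary inequality $2ab \leq a^2 + b^2$ with $a = f|\nabla u|/u$ and $b = |\nabla f|$ gives $\int_\Sigma f^2 |A|^2 \ee^{x_{n+1}}\,d\mu \leq \int_\Sigma |\nabla f|^2 \ee^{x_{n+1}}\,d\mu$, which is exactly $\mu_1 \geq 0$.

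The remaining direction $(2) \Rightarrow (3)$ is the main technical point, and I would carry it out via a Fischer--Colbrie--Schoen type exhaustion. Fix a basepoint $p_0 \in \Sigma$ and let $\Omega_1 \subset \Omega_2 \subset \cdots$ be relatively compact smooth subdomains exhausting $\Sigma$. Writing $\mu_1(\Omega_k)$ for the first weighted Dirichlet eigenvalue of $-L$ on $\Omega_k$, strict domain monotonicity together with $\mu_1(\Sigma) \geq 0$ forces $\mu_1(\Omega_k) > \mu_1(\Omega_{k+1}) \geq \mu_1(\Sigma) \geq 0$, so in particular $\mu_1(\Omega_k) > 0$ strictly for every $k$. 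The Fredholm alternative then supplies a unique solution $u_k$ to the weighted Dirichlet problem $Lu_k = 0$ in $\Omega_k$ with $u_k \equiv 1$ on $\partial\Omega_k$, and the refined Berestycki--Nirenberg--Varadhan form of the maximum principle, which applies precisely because $\mu_1(\Omega_k) > 0$, forces $u_k > 0$ in $\Omega_k$. After renormalizing so that $u_k(p_0) = 1$, the Harnack inequality yields locally uniform bounds, and interior Schauder estimates upgrade these to locally uniform $C^{2,\alpha}$ bounds. A diagonal subsequence then converges in $C^2_{loc}$ to a positive function $u$ on $\Sigma$ with $Lu = 0$, as desired.

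I expect the main obstacle to be this last step. Three classical ingredients must be orchestrated simultaneously: the strict positivity $\mu_1(\Omega_k) > 0$ to solve the boundary-value problem, the refined maximum principle needed to overcome the indefinite-sign zeroth-order term $|A|^2$ appearing in $L$, and locally uniform elliptic estimates for a sequence of solutions whose domains grow to fill $\Sigma$. Each ingredient is standard in the minimal-surface literature, but one must keep careful track of the weight $\ee^{x_{n+1}}$ throughout so that the weighted self-adjointness of $\mathcal{L}$ (Lemma \ref{1.1.30}) and the weighted variational characterization of $\mu_1$ are used consistently.
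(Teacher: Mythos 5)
Your proposal is correct, and it carries out the standard Fischer--Colbrie--Schoen argument that the paper itself simply cites (the paper gives no proof; it refers to \cite{FS1} and to \cite[Lemma 9.25]{CM1}). The equivalence $(1)\Leftrightarrow(2)$ is indeed just the definition of $\mu_1$, and your $(3)\Rightarrow(2)$ computation via Lemma \ref{1.1.30} applied to the pair $(f^2/u,\,u)$ together with the Cauchy--Schwarz/absorbing inequality is exactly the classical logarithmic test-function trick. For $(2)\Rightarrow(3)$, the exhaustion-by-Dirichlet-problems route (strict domain monotonicity gives $\mu_1(\Omega_k)>0$, so the Dirichlet problem $Lu_k=0$, $u_k\equiv 1$ on $\partial\Omega_k$ is uniquely solvable and the refined maximum principle makes $u_k>0$; then normalize at a basepoint, apply Harnack and interior Schauder, and pass to a $C^2_{loc}$ limit) is the argument from \cite{FS1}, and you have correctly identified where positivity of the Dirichlet eigenvalue enters. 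The only point worth making explicit in writing this up: at the final step one should observe that the limit $u$ satisfies $u(p_0)=1$ and $u\geq 0$, so the strong maximum principle (or Harnack again) upgrades $u\geq 0$ to $u>0$ on all of $\Sigma$. Otherwise the proposal is complete and matches the cited approach.
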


Next, we show that if the mean curvature $H$ of a translator has at most quadratic weighted $L^2$ growth, then the bottom of the spectrum $\mu_1$ is nonpostive.

\begin{lemma}\label{3.1.10}
Let $\Sigma^n \subset \mathbf{R}^{n+1}$ be a complete properly immersed translator. If the mean curvature $H$ satisfies the following weighted $L^2$ growth
\begin{equation}\label{3.1.11}
\int_{\Sigma\cap B_R} H^2 \ee^{x_{n+1}} \leq C_0 R^\alpha, \,\,\,\, \text{for any}\,\,\, R>1,
\end{equation}
where $C_0$ is a positive constant and $0\leq \alpha<2$,
then we have $\mu_1(\Sigma)\leq 0$.
\end{lemma}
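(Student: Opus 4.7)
The natural strategy is to exploit the identity $LH=0$ from Lemma \ref{1.1.15}, which says that $H$ is a formal eigenfunction of $L$ with eigenvalue zero. Since $H$ need not have compact support on a complete translator, I would plug a cut-off multiple of $H$ into the Rayleigh quotient for $\mu_1$ and let the cut-off exhaust $\Sigma$; the growth hypothesis is tailored exactly so that the ``boundary term'' produced by the cut-off vanishes in the limit.

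Concretely, fix a Lipschitz cut-off $\eta_R$ with $\eta_R\equiv 1$ on $B_R$, $\eta_R\equiv 0$ off $B_{2R}$, and $|\nabla\eta_R|\leq 1/R$. Using $L=\mathcal{L}+|A|^2$ and $LH=0$, a direct computation gives
\[
L(\eta_R H)=H\,\mathcal{L}\eta_R+2\lb{\nabla\eta_R,\nabla H}.
\]
Multiplying by $\eta_R H\,\ee^{x_{n+1}}$ and integrating, the self-adjointness of $\mathcal{L}$ in the weighted space (Lemma \ref{1.1.30}) applied to $u=\eta_R H^2$ and $v=\eta_R$ yields
\[
\int_\Sigma \eta_R H^2\,\mathcal{L}\eta_R\,\ee^{x_{n+1}}=-\int_\Sigma H^2|\nabla\eta_R|^2\,\ee^{x_{n+1}}-2\int_\Sigma \eta_R H\lb{\nabla H,\nabla\eta_R}\,\ee^{x_{n+1}},
\]
so the cross term cancels and we are left with the clean identity
\[
-\int_\Sigma \eta_R H\,L(\eta_R H)\,\ee^{x_{n+1}}=\int_\Sigma H^2|\nabla\eta_R|^2\,\ee^{x_{n+1}}.
\]

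Inserting this into the Rayleigh characterization of $\mu_1$ (which, by a standard approximation argument, is unchanged if the infimum is taken over Lipschitz functions with compact support), one obtains
\[
\mu_1\leq \frac{\int_\Sigma H^2|\nabla\eta_R|^2\,\ee^{x_{n+1}}}{\int_\Sigma \eta_R^2 H^2\,\ee^{x_{n+1}}}
\leq \frac{2^\alpha C_0\,R^{\alpha-2}}{\int_\Sigma\eta_R^2 H^2\,\ee^{x_{n+1}}},
\]
where the numerator is estimated using $|\nabla\eta_R|\leq 1/R$ together with the weighted $L^2$ growth assumption (\ref{3.1.11}) on $H$. Since $\alpha<2$, the numerator tends to $0$ as $R\to\infty$. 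Assuming $H\not\equiv 0$ (the only interesting case, since otherwise $\Sigma$ has $E_{n+1}$ tangent everywhere and the statement must be read vacuously), the denominator stabilizes to a strictly positive constant as $R\to\infty$, so letting $R\to\infty$ gives $\mu_1\leq 0$.

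The only real technical point is justifying the use of $\eta_R H$ as a test function: strictly speaking, $\mu_1$ is defined by the infimum over smooth compactly supported functions, and one needs that this equals the infimum over Lipschitz compactly supported functions, and also that $H$ itself is smooth enough for the computation to make sense. The first is a routine density argument and the second is immediate since $\Sigma$ is smooth. Hence the substantive step is really just the cancellation of the cross term in the integration by parts, which requires $LH=0$; everything else is standard cut-off manipulation combined with the hypothesis $\alpha<2$.
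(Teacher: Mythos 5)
Your proof is correct and follows the paper's argument essentially verbatim: both use the cut-off test function $\eta H$, exploit $LH=0$ together with the self-adjointness of $\mathcal{L}$ (Lemma \ref{1.1.30}) to cancel the cross terms and arrive at the clean identity $-\int_\Sigma \eta H\,L(\eta H)\,\ee^{x_{n+1}}=\int_\Sigma H^2|\nabla\eta|^2\,\ee^{x_{n+1}}$, and then invoke the growth hypothesis with $\alpha<2$ to drive the Rayleigh quotient to zero. The only (cosmetic) difference is the pair you feed to Lemma \ref{1.1.30}: you take $u=\eta_R H^2$, $v=\eta_R$, while the paper takes $u=H^2$, $v=\eta^2$.

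One caveat about your parenthetical remark: the claim that the case $H\equiv 0$ ``must be read vacuously'' is not accurate. When $H\equiv 0$ the hypothesis (\ref{3.1.11}) is satisfied trivially, so the statement is not vacuous, and in fact the conclusion can fail. A hyperplane containing $\mathbf{E}_{n+1}$ is a complete properly immersed translator with $H\equiv 0$ and $|A|\equiv 0$; the unitary substitution $g=\ee^{x_{n+1}/2}f$ gives $\int_\Sigma|\nabla f|^2\ee^{x_{n+1}}=\int_\Sigma|\nabla g|^2+\tfrac14\int_\Sigma g^2$ (the cross term integrates to zero), so $\mu_1=\tfrac14>0$ there. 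Thus the lemma really needs the implicit extra hypothesis $H\not\equiv 0$. The paper's proof shares this defect—the assertion that the right-hand side of (\ref{3.1.20}) ``must be negative for sufficiently large $R$'' silently assumes $\int_{\Sigma\cap B_R}H^2\,\ee^{x_{n+1}}$ is eventually bounded away from zero—so you were right to flag the degenerate case, but the correct resolution is to exclude it, not to declare it vacuous.
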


\begin{proof}
Given any fixed $\delta>0$, if we can construct a compactly supported function $u$ such that
\begin{equation}\label{3.1.12}
-\int_\Sigma u(Lu)\ee^{x_{n+1}} <\delta\int_\Sigma u^2\ee^{x_{n+1}},
\end{equation}
then this implies $\mu_1\leq 0$.

We will use the mean curvature $H$ and the fact that $LH=0$ to construct our test function. Suppose $\eta$ is a function with compact support and let $u=\eta H$. Then
\[
L(\eta H)= \eta LH+H\mathcal{L}\eta +2\lb{\nabla \eta, \nabla H}=H\mathcal{L}\eta +2\lb{\nabla \eta, \nabla H}.
\]
It follows that
\begin{equation}\label{3.1.15}
-\int_\Sigma \eta HL(\eta H)\ee^{x_{n+1}}=-\int_\Sigma \Big[ \eta H^2 \mathcal{L}\eta +2\eta H \lb{\nabla \eta, \nabla H}\Big]\ee^{x_{n+1}}. 
\end{equation}
Applying Lemma \ref{1.1.30} gives 
\begin{equation}\label{3.1.16}
-\int_\Sigma H^2 \mathcal{L}(\eta^2) \ee^{x_{n+1}}=\int_\Sigma \lb{\nabla H^2, \nabla \eta^2}\ee^{x_{n+1}}.
\end{equation}
Note that $\mathcal{L}\eta^2=2\eta \mathcal{L}\eta + 2|\nabla \eta|^2$. Combining this with  (\ref{3.1.15}) and (\ref{3.1.16}) yields that
\begin{equation}\label{3.1.18}
-\int_\Sigma \eta HL(\eta H)\ee^{x_{n+1}}=\int_\Sigma H^2|\nabla \eta|^2\ee^{x_{n+1}}.
\end{equation} 
If we choose $\eta$ to be one on $B_R$ and cut off linearly to zero on $B_{2R}\backslash B_R$, then (\ref{3.1.18}) gives
\begin{equation}\label{3.1.20}
\begin{split}
-\int_\Sigma \big(\eta HL(\eta H)+\delta \eta^2H^2\big)\ee^{x_{n+1}} \leq & \frac{1}{R^2}\int_{\Sigma \cap B_{2R}}H^2 \ee^{x_{n+1}} \\ &-\delta \int_{\Sigma \cap B_R} H^2 \ee^{x_{n+1}}.
\end{split}
\end{equation}
By the assumption (\ref{3.1.11}), the right-hand side of (\ref{3.1.20}) must be negative for sufficiently large $R$. Therefore, when $R$ is large, the function $u=\eta H$ satisfies (\ref{3.1.12}), and this completes the proof.
\end{proof}

Lemma \ref{3.1.5} and Lemma \ref{3.1.10} imply that any complete $L$-stable translator satisfying (\ref{3.1.11}) has $\mu_1=0$. In particular, the following result, i.e., Theorem \ref{0.2.1} holds.
\begin{corollary}\label{3.1.25}
For all grim reaper hyperplanes $\Gamma\times \mathbf{R}^{n-1}$, we have $\mu_1(\Gamma\times \mathbf{R}^{n-1})=0$.
\end{corollary}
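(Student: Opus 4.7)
The plan is to establish the two inequalities $\mu_1(\Gamma\times\mathbf{R}^{n-1})\geq 0$ and $\mu_1(\Gamma\times\mathbf{R}^{n-1})\leq 0$ separately.

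For the lower bound, I would note that $\Sigma:=\Gamma\times\mathbf{R}^{n-1}$ is an entire translating graph with defining function $f(y_1,z)=-\log\cos y_1$ on $(-\pi/2,\pi/2)\times\mathbf{R}^{n-1}$. Since translating graphs are $L$-stable by Shahriyari \cite{LS2}, Lemma \ref{3.1.5} immediately gives $\mu_1\geq 0$.

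For the upper bound, I would mimic the proof of Lemma \ref{3.1.10} but replace the ambient radial cutoff by one adapted to the product structure. With $u=\eta H$ for a Lipschitz compactly supported $\eta$, the identities $LH=0$, $\mathcal{L}\eta^2=2\eta\mathcal{L}\eta+2|\nabla\eta|^2$ and Lemma \ref{1.1.30} give exactly as in (\ref{3.1.18}) that
\[
-\int_\Sigma u\,Lu\,\ee^{x_{n+1}}=\int_\Sigma H^2|\nabla\eta|^2\,\ee^{x_{n+1}}.
\]
A direct calculation on $\Sigma$ yields $H=-\cos y_1$, $\ee^{x_{n+1}}=\sec y_1$, $d\mu=\sec y_1\,dy_1\,dz$, and hence $H^2\,\ee^{x_{n+1}}\,d\mu=dy_1\,dz$. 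For a separable cutoff $\eta(y_1,z)=\eta_1(y_1)\eta_2(z)$, the intrinsic gradient decomposes as $|\nabla\eta|^2=\cos^2 y_1\,(\eta_1')^2\eta_2^2+\eta_1^2|\nabla_z\eta_2|^2$, and the Rayleigh quotient factors cleanly as
\[
\frac{-\int_\Sigma u\,Lu\,\ee^{x_{n+1}}}{\int_\Sigma u^2\,\ee^{x_{n+1}}}=\frac{\int_{-\pi/2}^{\pi/2}\cos^2 y_1\,(\eta_1')^2\,dy_1}{\int_{-\pi/2}^{\pi/2}\eta_1^2\,dy_1}+\frac{\int_{\mathbf{R}^{n-1}}|\nabla_z\eta_2|^2\,dz}{\int_{\mathbf{R}^{n-1}}\eta_2^2\,dz}.
\]
Taking $\eta_1$ a piecewise linear cutoff equal to $1$ on $[-\pi/2+\epsilon,\pi/2-\epsilon]$ and using $\cos y_1\sim\pi/2-|y_1|$ near $\pm\pi/2$ forces the first term to be $O(\epsilon)$; taking $\eta_2$ a standard Euclidean cutoff of $B_R\subset\mathbf{R}^{n-1}$ makes the second term $O(R^{-2})$. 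Letting $\epsilon\to 0$ and $R\to\infty$ yields $\mu_1\leq 0$, hence $\mu_1=0$.

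The main obstacle will be that for $n\geq 3$ the weighted $L^2$ norm of $H$ on extrinsic balls grows like $\Theta(R^{n-1})\geq R^2$, so the hypothesis $\alpha<2$ in Lemma \ref{3.1.10} fails outright and one cannot simply quote it. The fix is to exploit the isometric product decomposition of $\Sigma$: the separable cutoff above converts the gradient in the $\mathbf{R}^{n-1}$ factor into a Dirichlet-type term $O(R^{-2})$ that is independent of $H$, while the $\Gamma$-direction cutoff costs only $O(\epsilon)$ thanks to the second-order vanishing of $\cos^2 y_1$ at the endpoints.
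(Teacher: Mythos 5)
Your proof is correct and follows essentially the same route as the paper: $L$-stability of translating graphs (Shahriyari) plus Lemma \ref{3.1.5} gives $\mu_1\geq 0$, and the test function $u=\eta H$ together with $LH=0$ and identity (\ref{3.1.18}) gives $\mu_1\leq 0$. The paper in fact only invokes Lemma \ref{3.1.10} directly (which covers $n\leq 2$) and relegates the $n\geq 3$ case to a remark saying the argument must be ``slightly modified''; your separable cutoff $\eta=\eta_1(y_1)\eta_2(z)$, which splits the Rayleigh quotient into an $O(\epsilon)$ term from the $\Gamma$-direction (using $\cos^2 y_1\sim(\pi/2-|y_1|)^2$) and an $O(R^{-2})$ term from the $\mathbf{R}^{n-1}$-direction, is precisely the needed modification, spelled out explicitly. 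One small terminological slip: $\Gamma\times\mathbf{R}^{n-1}$ is a translating graph over the slab $(-\pi/2,\pi/2)\times\mathbf{R}^{n-1}$, not an \emph{entire} graph; Shahriyari's stability result applies to all translating graphs, so the conclusion is unaffected.
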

\begin{remark}
If $n\geq 3$, Corollary \ref{3.1.25} does not follow directly from Lemma \ref{3.1.10}, but we can slightly modify the argument in the proof of Lemma \ref{3.1.10} to get the result.
\end{remark}

Using a similar argument as in the proof of  \cite[Lemma 9.25]{CM1}, we have the following characterization of $\mu_1$ for translators.
\begin{lemma}\label{3.1.35}
If $\Sigma^n \subset \mathbf{R}^{n+1}$ is a complete properly immersed translator and $\mu_1(\Sigma)\neq-\infty$, then there is a positive function $u$ on $\Sigma$ such that $Lu=-\mu_1 u$. 
\end{lemma}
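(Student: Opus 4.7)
The plan is to exhaust $\Sigma$ by an increasing sequence of relatively compact, smooth domains $\Omega_1 \subset \Omega_2 \subset \cdots$ with $\bigcup_k \Omega_k = \Sigma$, solve a weighted Dirichlet eigenvalue problem on each $\Omega_k$, and then extract a limit using elliptic estimates and the Harnack inequality. Because $L = \mathcal{L} + |A|^2$ and, by Lemma \ref{1.1.30}, $\mathcal{L}$ is self-adjoint with respect to the weighted measure $\ee^{x_{n+1}}\,d\mu$, the operator $L$ is self-adjoint in $L^2(\ee^{x_{n+1}}d\mu)$. Hence standard spectral theory applied on each $\Omega_k$ produces a first Dirichlet eigenvalue $\mu_1^k$ and a corresponding positive eigenfunction $u_k$ with $Lu_k = -\mu_1^k u_k$ in $\Omega_k$ and $u_k = 0$ on $\partial \Omega_k$. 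Positivity in the interior follows from the strong maximum principle together with the variational characterization of $\mu_1^k$.

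Next I would establish $\mu_1^k \to \mu_1$. Extending any Dirichlet eigenfunction on $\Omega_k$ by zero gives an admissible Lipschitz test function with compact support in $\Sigma$, so $\mu_1^k \geq \mu_1$; by domain monotonicity, $\mu_1^k$ is non-increasing in $k$. Conversely, for any $\varepsilon>0$ pick a compactly supported smooth $f$ with Rayleigh quotient less than $\mu_1 + \varepsilon$; for $k$ large, $\mathrm{supp}\,f \subset \Omega_k$, forcing $\mu_1^k < \mu_1 + \varepsilon$. The hypothesis $\mu_1 \neq -\infty$ ensures the limit is finite.

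Now I would normalize $u_k$ by $u_k(p) = 1$ at a fixed point $p \in \Omega_1$ and pass to a subsequential limit. On every fixed compact subset $K \subset \Sigma$, the coefficients of $L$ are bounded (the drift $\mathbf{E}_{n+1}^T$ satisfies $|\mathbf{E}_{n+1}^T| \leq 1$, while $|A|^2$ is bounded on $K$ by smoothness of the translator), so the equation $L u_k + \mu_1^k u_k = 0$ is uniformly elliptic with uniformly bounded coefficients once $K \subset \Omega_k$. The Harnack inequality then gives uniform upper and lower bounds for $u_k$ on $K$ in terms of $u_k(p) = 1$, and interior Schauder estimates give uniform $C^{2,\alpha}(K)$ bounds. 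A diagonal Arzel\`a--Ascoli argument extracts a subsequence converging in $C^2_{\mathrm{loc}}$ to a nonnegative function $u$ with $u(p) = 1$ and $Lu = -\mu_1 u$ on $\Sigma$. The strong maximum principle then forces $u > 0$ everywhere.

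The only point requiring care is the Harnack inequality on a non-compact, possibly unbounded translator. This is handled by working intrinsically: on each fixed compact subset of $\Sigma$ the geometry and all coefficients of $L$ are bounded a priori, so the classical De Giorgi--Nash--Moser Harnack constant depends only on $K$ and not on $k$. Everything else is routine, which is exactly why the argument parallels \cite[Lemma 9.25]{CM1} for shrinkers; the single structural input needed is the self-adjointness of $L$ with respect to $\ee^{x_{n+1}}d\mu$, already provided by Lemma \ref{1.1.30}.
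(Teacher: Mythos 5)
Your argument is correct and is essentially the same exhaustion-and-Harnack argument that the paper invokes by citing \cite[Lemma 9.25]{CM1}: solve the weighted Dirichlet eigenvalue problem on an exhausting sequence of domains, show $\mu_1^k \downarrow \mu_1$ by domain monotonicity and the definition of $\mu_1$, normalize at a point, use Harnack plus interior Schauder estimates to extract a $C^2_{\mathrm{loc}}$ limit, and conclude positivity by the strong maximum principle. The paper provides no further details, so nothing in your write-up deviates from the intended proof.
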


\subsection{Uniqueness lemma and rigidity results}
The proof of Theorem \ref{0.20} relies on the following standard uniqueness lemma for general hypersurfaces. 
\begin{lemma}\label{3.2.5}
Let $\Sigma^n \subset \mathbf{R}^{n+1}$ be a smooth complete hypersurface. If $g>0$ and $h$ are two functions on $\Sigma$ which satisfy
\begin{equation}\label{3.2.6}
\Delta g+\lb{\nabla f, \nabla g}+Vg=0,
\end{equation}
and
\begin{equation}\label{3.2.7}
\Delta h+\lb{\nabla f, \nabla h}+Vh=0,
\end{equation}
where $f$ and $V$ are smooth functions on $\Sigma$. If $\Sigma$ is closed or $h$ satisfies the weighted $L^2$ growth
\begin{equation}\label{3.2.8}
\int_{\Sigma\cap B_R} h^2 \ee^f \leq C_0R^{\alpha},\,\,\,\, \text{for any}\,\,\,  R>1,
\end{equation}
where $C_0$ is a positive constant and $0\leq \alpha<2$, then $h=Cg$ for some constant $C$.
\end{lemma}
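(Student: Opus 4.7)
The plan is to reduce the problem to showing that the ratio $w=h/g$ is constant, by deriving a drifted equation for $w$ and then running an energy/cutoff argument in the weighted space. Since $g>0$ is bounded away from zero locally, the function $w=h/g$ is a well-defined smooth function on $\Sigma$. Writing $h=gw$ and expanding $\nabla h$ and $\Delta h$, one finds
\[
\Delta h+\lb{\nabla f,\nabla h}+Vh \;=\; w\bigl(\Delta g+\lb{\nabla f,\nabla g}+Vg\bigr)+g\,\Delta w+2\lb{\nabla w,\nabla g}+g\lb{\nabla f,\nabla w\rangle}.
\]
Using \eqref{3.2.6} and \eqref{3.2.7}, the potential terms cancel, and dividing by $g>0$ yields
\[
\Delta w+\lb{\nabla f+2\nabla\log g,\nabla w}=0.
\]
Equivalently, $\operatorname{div}\bigl(g^2 \ee^{f}\nabla w\bigr)=0$ on $\Sigma$ in the weighted sense, i.e., $w$ is harmonic with respect to the drifted Laplacian whose natural weight is $g^2 \ee^{f}$.

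Next I would run a standard logarithmic-cutoff energy argument in this weighted measure. For $R\geq 1$ choose a Lipschitz cutoff $\eta$ on $\Sigma$ with $\eta\equiv 1$ on $B_R$, $\eta\equiv 0$ outside $B_{2R}$, and $|\nabla\eta|\leq C/R$. Multiplying the conservation law by $\eta^2 w$ and integrating by parts against the weight $\ee^f$ gives
\[
\int_{\Sigma}\eta^2 g^2 |\nabla w|^2\,\ee^{f}\;=\;-2\int_{\Sigma}\eta\,w\,g^2\lb{\nabla\eta,\nabla w}\,\ee^{f}.
\]
Applying Cauchy--Schwarz and absorbing one copy of $\eta g|\nabla w|$ into the left-hand side produces
\[
\int_{\Sigma}\eta^2 g^2|\nabla w|^2\,\ee^{f}\;\leq\;4\int_{\Sigma} w^2 g^2|\nabla\eta|^2\,\ee^{f}\;=\;4\int_{\Sigma}h^2|\nabla\eta|^2\,\ee^{f},
\]
where the last equality is the crucial identity $w^2g^2=h^2$ that lets the $L^2$ hypothesis enter without needing any control on $g$. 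Inserting the gradient bound on $\eta$ and using \eqref{3.2.8},
\[
\int_{\Sigma\cap B_R} g^2|\nabla w|^2\,\ee^{f}\;\leq\;\frac{C^2}{R^2}\int_{\Sigma\cap B_{2R}}h^2\,\ee^{f}\;\leq\;\frac{C'}{R^{2-\alpha}},
\]
which tends to $0$ as $R\to\infty$ because $\alpha<2$. Hence $g^2|\nabla w|^2\equiv 0$, and since $g>0$ we conclude $\nabla w\equiv 0$, so $w$ is a constant $C$ and $h=Cg$. The closed case is even simpler: take $\eta\equiv 1$ and the boundary term in the integration by parts vanishes, giving the same conclusion immediately.

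The only delicate step is the justification of the integration by parts on a complete, possibly non-compact hypersurface. This is handled by using compactly supported Lipschitz cutoffs $\eta$ throughout and invoking the standard density argument, so that Stokes' theorem applies to $\eta^2 w g^2 \ee^f \nabla w$ without boundary contribution; the hypothesis that $\Sigma$ is complete (and that the $B_R$ are exhaustion sets in the induced distance, or in the extrinsic distance if $\Sigma$ is properly immersed) is what makes such exhausting cutoffs available. I expect this to be the only point requiring a bit of care, while the algebraic reduction to a $w$-equation and the cutoff estimate are standard.
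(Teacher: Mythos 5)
Your proof is correct and follows essentially the same route as the paper's: both set $w=h/g$, derive that $w$ is harmonic for the drifted operator with weight $g^2\ee^f$, and then run the cutoff/Cauchy--Schwarz argument, crucially using $w^2g^2=h^2$ to bring in the growth hypothesis. The only cosmetic difference is that the paper phrases the energy estimate via $\mathcal{L}_g w^2=2|\nabla w|^2\ge 0$ and integrates $\text{div}(\phi^2 g^2\ee^f\nabla w^2)$, whereas you multiply the equation directly by $\eta^2 w$; the resulting inequality is identical.
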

\begin{proof}
For convenience, we let $\mathcal{L}=\Delta+\lb{\nabla f,\nabla \cdot}$. Thus, $\mathcal{L}g+Vg=0$ and $\mathcal{L}h+Vh=0$.

Let $w=\frac{h}{g}$. Then by (\ref{3.2.6}) and (\ref{3.2.7}), we have
\begin{equation}\label{3.2.10}
\mathcal{L}w=\frac{g\mathcal{L}h-h\mathcal{L}g}{g^2}-2\lb{\nabla w,\frac{\nabla g}{g}}= -2\lb{\nabla w,\frac{\nabla g}{g}}.
\end{equation}
We define a drifted operator $\mathcal{L}_g$ by
\begin{equation}\label{3.2.12}
\mathcal{L}_g=\frac{\ee^{-f}}{g^2}\text{div}(g^2\ee^f\,\nabla \cdot)=\mathcal{L}+2\lb{\nabla \cdot,\frac{\nabla g}{g}}.
\end{equation}
Then (\ref{3.2.10}) gives 
\begin{equation}\label{3.2.14}
\mathcal{L}_g w^2=2w\mathcal{L}_g w+2|\nabla w|^2=2|\nabla w|^2 \geq 0.
\end{equation}
Now, if $\Sigma$ is closed, then integrating (\ref{3.2.14}) finishes the proof. If $\Sigma$ is noncompact, we choose a cut off function $\phi$. By (\ref{3.2.12}) and (\ref{3.2.14})
\begin{equation}\label{3.2.15}
\begin{split}
\frac{\ee^{-f}}{g^2}\text{div}(\phi^2g^2\ee^f\,\nabla w^2) & =\lb{\nabla \phi^2,\nabla w^2}+\phi^2\mathcal{L}_gw^2 \\ & =\lb{\nabla \phi^2,\nabla w^2}+2\phi^2|\nabla w|^2.
\end{split}
\end{equation}
Using the Stokes' theorem, (\ref{3.2.15}) gives
\begin{equation}\label{3.2.16}
0=\int_\Sigma \big[\lb{\nabla \phi^2,\nabla w^2}+\phi^2\mathcal{L}_gw^2\big]g^2 \ee^f.
\end{equation}
Applying the absorbing inequality, (\ref{3.2.16}) gives
\[
0\geq \int_\Sigma \Big[-\phi^2|\nabla w|^2-4w^2|\nabla \phi|^2+2\phi^2|\nabla w|^2\Big]g^2\ee^f.
\]
This is equivalent to 
\[
\int_\Sigma \phi^2|\nabla w|^2 g^2 \ee^f \leq 4\int_\Sigma w^2 |\nabla \phi|^2 g^2 \ee^f.
\]
If we choose $\phi$ to be identically one on $B_R$ and cuts off linearly to zero from $\partial B_R$ to $\partial B_{2R}$, then $|\nabla \phi| \leq 1/R$. Combining the weighted growth of $h$, i.e.,   (\ref{3.2.8}) and taking $R\rightarrow \infty$, we conclude that $|\nabla w|=0$. This completes the proof.
\end{proof}

\begin{remark}\label{3.2.20}
We can slightly modify the proof of Lemma \ref{3.2.5} to show that it still holds if we assume $\Delta h+\lb{\nabla f, \nabla h}+Vh\geq 0$ and $h\geq 0$, which is a special case of \cite[Theorem 8]{R3}.
\end{remark}

As an application of Lemma \ref{3.2.5} to $L$-stable translators, we have the following lemma.
\begin{lemma}\label{3.2.25}
Let $\Sigma^n \subset \mathbf{R}^{n+1}$ be a complete $L$-stable translator. If the mean curvature $H$ satisfies the weighed $L^2$ growth
\begin{equation}
\int_{\Sigma\cap B_R} H^2 \ee^{x_{n+1}} \leq C_0 R^\alpha, \,\,\,\, \text{for any}\,\,\, R>1,
\end{equation}
where $C_0$ is a positive constant and $0\leq \alpha<2$, then $H\equiv0$ or $H$ does not change sign.
\begin{proof}
This follows from Lemma \ref{3.1.5}, Lemma \ref{3.2.5} and the fact that $LH=0$.
\end{proof}
\end{lemma}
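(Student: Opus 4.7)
The strategy is to exhibit $H$ as a scalar multiple of a strictly positive solution of $Lv=0$ via the uniqueness principle in Lemma \ref{3.2.5}. Because $\Sigma$ is $L$-stable, the equivalence (1)$\Leftrightarrow$(3) of Lemma \ref{3.1.5} produces a smooth function $u>0$ on $\Sigma$ with $Lu=0$; expanded, this reads
\[
\Delta u + \langle \mathbf{E}_{n+1},\nabla u\rangle + |A|^2 u = 0.
\]
On the other hand, Lemma \ref{1.1.15} gives $LH=0$, so the mean curvature satisfies the same second order equation
\[
\Delta H + \langle \mathbf{E}_{n+1},\nabla H\rangle + |A|^2 H = 0.
\]

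I would then apply Lemma \ref{3.2.5} with the identifications $f=x_{n+1}$, $V=|A|^2$, $g=u$, and $h=H$. The drift $\langle \nabla f,\nabla\cdot\rangle$ matches $\langle \mathbf{E}_{n+1},\nabla\cdot\rangle$ by (\ref{1.1.14}), and the weight $\ee^{f}=\ee^{x_{n+1}}$ coincides with the weight appearing in the hypothesis of the current lemma. In particular, the weighted $L^2$ growth condition (\ref{3.2.8}) required by Lemma \ref{3.2.5} for $h$ is exactly the assumed bound $\int_{\Sigma\cap B_R} H^2 \ee^{x_{n+1}} \leq C_0 R^\alpha$ with $0\leq\alpha<2$. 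The completeness of $\Sigma$ and the smoothness of both $u$ and $H$ supply the remaining standing assumptions of Lemma \ref{3.2.5}. Its conclusion therefore yields a constant $C\in\mathbf{R}$ such that $H=Cu$ on $\Sigma$.

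Finally, I would read off the dichotomy from the strict positivity of $u$. If $C=0$, then $H\equiv 0$. If $C\neq 0$, then $H=Cu$ has the constant sign of $C$ throughout $\Sigma$ and in particular does not change sign.

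The lemma is a clean synthesis of already established pieces, so I do not anticipate any substantive obstacle; the only point worth double-checking is that the abstract framework of Lemma \ref{3.2.5} applies verbatim with $f=x_{n+1}$, which is immediate from the definition of $L$ and the weighted measure $\ee^{x_{n+1}}\,d\mu$ used throughout the paper.
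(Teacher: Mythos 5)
Your proposal is correct and follows exactly the route the paper intends: use Lemma \ref{3.1.5} to extract a positive solution $u$ of $Lu=0$ from $L$-stability, then apply the uniqueness Lemma \ref{3.2.5} with $f=x_{n+1}$, $V=|A|^2$, $g=u$, $h=H$ (using $LH=0$ and the assumed weighted $L^2$ growth) to get $H=Cu$, whence the dichotomy. No gaps.
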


We are now ready to prove Theorem \ref{0.20}.

\begin{proof}[Proof of Theorem \ref{0.20}]
First by Lemma \ref{3.2.25}, we conclude that $H\equiv 0$ or $H$ does not change sign. If $H\equiv 0$, then $\Sigma$ is just a hyperplane, which is the first case. Next, we assume $H$ does not change sign and $H>0$. Then $|A|$ does not vanish.

By Lemma \ref{1.1.15}, we have
\[
L|A|=\frac{|\nabla A|^2-|\nabla |A||^2}{|A|} \geq 0.
\]
Since $LH=0$ and $L|A|\geq 0$, applying Lemma \ref{3.2.5} with $g=H$ and $h=|A|$, by Remark \ref{3.2.20}, we conclude that there exists a constant $C$ such that 
\[
|A|=CH.
\]
It follows that $\Sigma$ is the grim reaper or a grim reaper hyperplane $\Gamma\times \mathbf{R}^{n-1}$ (see Theorem B in \cite{MSS} or \cite{IR1}).

Note that if $n\geq 3$, the norm of the second fundamental form $|A|$ of a grim reaper hyperplane $\Gamma\times \mathbf{R}^{n-1}$ has at least quadratic weighted $L^2$ growth. Therefore, by our assumption, we conclude that $n\leq 2$. If $n=1$, then $\Sigma$ is the grim reaper $\Gamma$, which is the second case. If $n=2$, then $\Sigma$ is a grim reaper hyperplane $\Gamma \times \mathbf{R}$ and $1\leq \alpha<2$, which is the last case.
\end{proof}

\end{document}